\numberwithin{equation}{section}
\theoremstyle{definition}
\newtheorem{teo}{Theorem}[section]
\newtheorem{pro}[teo]{Proposition}
\newtheorem{defi}[teo]{Definition}
\newtheorem{lem}[teo]{Lemma}
\newtheorem{ex}[teo]{Example}
\newtheorem{rem}[teo]{Remark}
\newtheorem{coro}[teo]{Corollary}
\newtheorem{exe}[teo]{Example}
\newcommand{\m}{{}^{-1}}
\newcommand{\Z}{\mathbb{Z}}
\newcommand{\X}{\mathcal{X}}
\newcommand{\G}{{\mathcal G}}
\title{\textbf{Partial actions  on quotient spaces  and globalization }}
\author{
	Luis Mart\'inez, H\'{e}ctor Pinedo and  Andr\'{e}s Villamizar\\
	\small  Escuela de Matem\'{a}ticas\\
	\small Universidad Industrial de Santander\\
	\small Cra. 27 calle 9, Bucaramanga, Colombia\\
	\small  e-mail: luchomartinez9816@hotmail.com, hpinedot@uis.edu.co, andresvillamizar1793@hotmail.com\\
}
\date{\today}
\begin{document}

	\maketitle
	\begin{abstract}Given a partial action of a topological group $G$ on a space $X$ we determine properties $\mathcal P$ which can be extended from $X$ to its globalization. We treat the cases when  $\mathcal P$ is any of the following:  Hausdorff, regular, metrizable,  second countable and having invariant metric.   Further, for    a normal subgroup $H$  we introduce  and study a  partial action  of $G/H$ on the orbit space $X/\!\sim,$  applications to   invariant metrics and inverse limits are presented.
		
	\end{abstract}
	\noindent
	\textbf{2020 AMS Subject Classification:} Primary  54H15. 
Secondary  54F65, 54E35. \\
\textbf{Key Words:} Topological partial action, $\eta-$invariant metric, $G$-equivalent spaces, orbit equivalence space, quotient space. 

	\section{Introduction}   Given an action $a : G \times Y \to Y$ of a group $G$ on a space $Y$ and an invariant subset $X$ of $Y$ (i.e. $a(g, x) \in X, x\in X,\,\, g\in G$), the restriction of $a$ to $G \times X$ is an action of $G$ over $X.$ If $X$ is not invariant, we get what is called a {\it partial action} on $X,$ that is a collection of partial maps $\eta_g, g\in G$ on $X$ satisfying $\eta_1 = {\rm id}_X$ and $\eta_g \circ \eta_h \subseteq \eta_{gh}$, for each $g, h \in G$. The notion of
partial group action  appeared in the context of $C^*$-algebras
 in  \cite{Ruy1}, there $C^*$-algebraic crossed products by partial automorphisms played an important role to analyze and characterize  their internal structure.    Since \cite{Ruy1},  partial actions have been spreading in several branches of mathematics,  for a detailed account  on partial actions the interested reader may consult \cite{DO} or \cite{Ruy}.
 A relevant question is if  a partial action can be obtained by restriction of a corresponding collection of maps on some superspace. In the topological context, this  is known as the globalization problem  and was studied in \cite{AB} and independently in \cite{KL}. It was proven that for any partial action $\eta$ of a topological group $G$ on a topological space $X$ there is a topological superspace $Y$ of $X$ and a continuous action $\mu$ of $G$ on $Y$ such that  the restriction of  $\mu$ to $X$ is $\eta$.  Such a space  is called a globalization of $X.$ It is also shown that there is a minimal globalization $X_G$ called the enveloping space of $X.$

We shall mainly work with partial actions for which the partial maps have clopen domains, that is closed and open, this kind of partial actions were  considered  in \cite{DEX} where the authors studied  the ideal structure of the algebraic partial crossed product  $\mathcal{L}_c(X)\rtimes G$ being $\mathcal{L}_c(X)$ the algebra consisting of all locally constant, compactly supported functions on $X,$ while in \cite{Ruy2} the authors showed that partial  actions on the Cantor set by clopen subsets are exactly the ones for which the enveloping space is Hausdorff, also in \cite{BE} partial actions with clopen domains were relevant  to introduce and study topological entropy for a partial action of $\Z$ on metric spaces, and in \cite{GOR} the authors studied topological dynamics arising from partial actions on clopen subsets of a compact space.

Our work is organized in the following way: After the introduction, in Section \ref{pactions} we present some  notions, examples  and results that will be useful during the work,  especially Proposition \ref{cert1} gives conditions for the enveloping space to be $T_1$ while Theorem \ref{globor} establish that the globalization  of a partial action is actually  an orbit space. At the beginning of  Section \ref{parprof}  we treat the question if  a structural  property $\mathcal P$ of a space $X$ endowed with a partial action of a group $G$ is  inherited by  the spaces $X/\!\sim_G$ and $X_G$ (see equations \eqref{porbit} and \eqref{eqgl} for the proper definitions of  $X/\!\sim_G$ and $X_G,$ respectively). To do so we first show  in Lemma \ref{cerrada}  that the quotient map $\pi_G$ defined in \eqref{qmap} is perfect, this  allows us to present  in  Theorem \ref{propiedades} sufficient conditions in which an   affirmative answer holds for when $\mathcal P$ is any properties of being  Hausdorff, regular, metrizable and second countable. Second part of Section \ref{parprof}  deals with invariant metrics, there we  give in  Theorem \ref{metrinv}  a condition for a space $X$ with a partial action of a compact group so that it admits an invariant metric. At this point  it is important  to note  that in the classical case of finding characterizations of $G$-spaces having  invariant metric  have been extensively studied, in particular it is known that if a space $X$ with a global action admits an invariant metric, then the orbit space  $X/\!\sim_G$ is metrizable provided that is $T_1,$ however, this result does not hold for partial actions, where one needs to impose regularity conditions (see  Remark \ref{nomet} and  Proposition \ref{orbmet}, respectively).  In Section \ref{parinq} we  take a partial action $\eta$ of $G$ on a space $X$, a normal subgroup $H$ of $G$ and we show  in Theorem \ref{lem4.3} how to construct a partial action $\eta_{G/H}$ of $G/H$ on the orbit space $X/\sim_H,$ moreover, in the same Theorem is shown that the orbit spaces $(X/\sim_H)/\sim_{G/H}$ and $X/\!\sim_G$  are homeomorphic. The structure of the partial action $\eta_{G/H}$ as well as its globalization are presented in Theorem \ref{pro4.4}. As an application for the construction of  $\eta_{G/H}$  we treat in Proposition \ref{parinv}  partial actions on inverse limits, where  we provide suitable conditions for which a space $X$ is $G$-equivalent to an inverse limit $\displaystyle{\lim_{\longleftarrow}}X_i,$ and such that the partial action on $X$ satisfies a compatibility relation with the partial actions associated to $X_i$

Throughout the work, several examples are shown to clarify the notions and results.

\section{Preliminaries}\label{pactions} 
Let $G$ be a group  with identity element $1,$ $X$ be a  set, and   $\eta: G\times X\to X$, $(g,x)\mapsto g\cdot x$  be a partially defined  function, that is, a function whose domain, denoted by $G*X,$ is contained in $ G\times X.$ We shall write $\exists g \cdot x$ to mean that $(g, x)$ belongs to $G*X.$ We say that  $\eta$ is a {\it  partial action}  of $G$ on $X$ if for each $g,h\in G$ and $x\in X$ the following assertions hold:
	\begin{enumerate}
		\item [(PA1)] If $\exists g\cdot x$, then $\exists g^{-1}\cdot (g\cdot x)$ and $g^{-1}\cdot (g\cdot x)=x$,
		\item [(PA2)] If $\exists g\cdot(h\cdot x)$, then $\exists (gh)\cdot x$ and $g\cdot(h\cdot x)=(gh)\cdot x$,
		\item [(PA3)] $\exists 1\cdot x$ and $1\cdot x=x.$
	\end{enumerate}
 For $g\in G$ set  $X_g=\{x\in X\mid \exists g\m\cdot x \}.$
Then $\eta$ induces a family of bijections $\{\eta_g\colon X_{g\m}\ni x\mapsto g\cdot x\in X_g \}_{g\in G}.$   We also denote this family  by $\eta.$ Notice that $\eta$ {\it acts} (globally) on $X$ if $\exists g\cdot x,$ for all $(g,x)\in G\times X,$ or equivalently, $X_g=X,$ for any $g\in G.$
The following result characterizes partial actions in terms of a family of bijections.
\begin{pro} \label{QR}{\rm  \cite[Lemma 1.2]{QR} \label{fam} A partial action $\eta$ of $G$ on $X$ is a family $\eta=\{\eta_g\colon X_{g\m}\to X_g\}_{g\in G},$ where $X_g\subseteq X,$  $\eta_g\colon X_{g\m}\to X_g$ is bijective, for  all $g\in G,$  and:
\begin{itemize}
\item[(i)]$X_1=X$ and $\eta_1={\rm id}_X;$
\item[(ii)]  $\eta_g( X_{g\m}\cap X_h)=X_g\cap X_{gh};$
\item[(iii)] $\eta_g\eta_h\colon X_{h\m}\cap  X_{ h\m g\m}\to X_g\cap X_{gh},$ and $\eta_g\eta_h=\eta_{gh}$ in $ X_{h\m}\cap  X_{ h\m g\m};$
\end{itemize}
for all $g,h\in G.$}
\end{pro}


\begin{defi}
	Let $G$ be a topological group and $X$ be a topological space. A topological partial action of $G$ on $X$ is a partial action $\eta=\{\eta_g\colon X_{g\m}\to X_g\}_{g\in G}$ on the underlying set $X$ such that $X_g$ is open, $\eta_g$ is homeomorphism for any $g\in G.$ Moreover we say that $\eta$ is continuous if   $\eta:G*X\rightarrow X$ is continuous, where $G\times X$ has the product topology and  $G*X$ is endowed  with the relative topology.
\end{defi}

\textit{ Throughout this paper $G$ will denote a Hausdorff topological group, $X$ a topological space and all partial actions will be topological.}\\


Now we present an example of a continuous and topological partial action that will be useful in  Section \ref{parprof}. We endow $\mathbb{Z}$ with the {\it $p$-adic topology} $\mathcal{T}_p$, where $p$ is a prime number. For the reader's convenience we recall its construction here. See \cite [Example 1.18]{T} for details. The family $\mathcal{V}=\{p^k\mathbb{Z}\}_{k\in \Z^+}$  satisfy  the conditions given in \cite [Theorem 1.13]{T}, then  $\mathcal{B}=\{m+ p^k\mathbb{Z}: m\in\mathbb{Z},\ k\in \Z^+\}$ is a basis for the topology $\mathcal{T}_p$ and   $(\mathbb{Z},+, \mathcal{T}_p)$ is  a topological group.

\begin{exe}\label{padtop}  Let $X$ be a disconnected topological space, $U\subseteq X$ be a proper clopen set, $f:U\rightarrow U$ be a  homeomorphism and $n\in \Z$.  We set $f^0={\rm id}_U,$ if $n\in \Z^+$  we write $f^n$ as $n$-times the composition of $f$  with itself and if $n<0,$ then $f^{n}=(f^{-1})^{-n}.$ We define a  partial action {$\eta$} of $\Z$ on $X$ by setting 
	\begin{equation}\label{zx}\mathbb{Z}*X=(\mathbb{Z}\times U)\cup (\{0\}\times X)\,\,\, {\rm and}\,\,\, \eta: \mathbb{Z}*X\ni (n,a)\mapsto \left\lbrace\begin{array}{c} f^n(a),~if~a\in U, \\ a,~if~n=0~and~a\not\in U. \end{array}\right.\in X.
	\end{equation}
	 Suppose there is $p$ a prime number such that   $f^p={\rm id}_U,$   and consider  $\mathbb{Z}$ with the $p$-adic topology.  Since $U$ is open, then  $\eta$ is  a topological  partial action.  To show that it is continuous take $(n,x)\in \mathbb{Z}*X$, and  $V\subseteq X$ an open set such that $\eta(n,x)\in V$. There are two cases to consider.

	{\bf Case 1:}  $x\in U$.  Then $\eta(n,x)=f^n(x)\in V$. Since $V\cap U$  is open in $U$,  there is $Z\subseteq U$ an open set such that  $f^n(Z)\subseteq V\cap U$.  First, suppose that  $p$ does not divide $|n|$. Then the open set  $W=[(n+Z_1)\times Z]\cap (\mathbb{Z}*X)\subseteq \mathbb{Z}*X$ satisfies $\eta(W)\subseteq V,$ because for $(t,y)\in W$ we have  $y\in Z\subseteq U$ and there is  $m\in \mathbb{Z}$ such that $t=n+pm$. Note that  $t\neq 0$  since $p$ does not divide  $|n|$.  Further since $y\in U$, we get  $(n,y)\in \mathbb{Z}*X,$ and $(pm,f^n(y))\in \mathbb{Z}*X.$ The fact $f^n(y)\in U$ gives $$\eta(t,y)=f^t(y)=f^{pm}(f^n(y))=f^n(y)\in V.$$
	
Now if   $p$ divides $|n|$ we  let  $i=\max\{k\in \mathbb{Z}^+: p^k\ {\text divides}\  n\}$. Consider the open set   $W=[(n+Z_{i+1})\times Z]\cap (\mathbb{Z}*X)$. Then for  $(t,y)\in W$, by the maximality of $i$  there is  $m\in\mathbb{Z}$ such that $t=n+p^{i+1}m$, $y\in Z\subseteq U$ and  $t\neq 0.$ Since $y\in U$,  we get  $$\eta(t,y)=f^{n+p^{i+1}m}(y)=f^{p^{i+1}m}(f^n(y))=f^n(y)\in V,$$ and $\eta(W)\subseteq V$.

	{\bf Case 2:} $x\notin U.$  By \eqref{zx} we have   $n=0$ and $\eta(n,x)=x\in V$.  Since  $U$ is closed, then   $V\cap (X\setminus U)$ is an open subset of  $X$ containing $x$.  Take  $Z\subseteq X$  open such that $x\in Z\subseteq V\cap (X\setminus U)$ and let  $W=(Z_1\times Z)\cap (\mathbb{Z}*X).$ It is clear that  $(n,x)=(0,x)\in W$. Further if  $(t,y)\in W$, then $y\notin U$ and  $t=0$  from this we get  $\eta(t,y)=\eta(0,y)=y\in V,$ showing that $\eta$ is continuous.
\end{exe}
\subsection{On the enveloping space} Partial actions can be induced from global ones as the following example shows.

\begin{ex}  \label{indu} (Induced partial action)
Let $u \colon G\times Y\to Y$ be a continuous action of $G$ on a topological space $Y$ and $X\subseteq Y$ an open set.  For $g\in G,$ set $X_g=X\cap u_g(X)\,\,\, \text{ and let}\,\,\, \eta_g=u_g\restriction X_{g\m}.$
 Then $\eta \colon G* X\ni (g,x)\mapsto \eta_g(x)\in X $ is a continuous and  topological partial action of $G$ on $X.$  In this case we say that $\eta$ is  {\it induced} by  $u.$
\end{ex}
\begin{rem}\label{domiclopen}
	Given a continuous global action $\eta$ of  $G$ on $X$,  its induced partial action on an  open (resp. closed) subset  $Y$ of $X$ has open (resp. closed) domain in $G\times Y$.
\end{rem}

An important problem on  partial actions is whether they can be induced by  global actions. In the topological sense, this turns out to be affirmative and a proof was presented in  \cite[Theorem 1.1]{AB} and independently in \cite[Section 3.1]{KL}.  For the reader's convenience, we recall their construction. Let $\eta$ be a partial action of $G$ on $X.$ Define an
equivalence relation on $ G\times X$ as follows:
\begin{equation}
\label{eqgl}
(g,x)R(h,y) \Longleftrightarrow x\in X_{g\m h}\,\,\,\, \text{and}\, \,\,\, \eta_{h\m g}(x)=y,
\end{equation}
and denote by  $[g,x]$   the equivalence class of the pair $(g,x).$
Consider $X_G=(G\times X)/R$  with  the quotient topology and the map \begin{equation}
	\label{action}
	\mu \colon G\times X_G\ni (g,[h,x])\mapsto [gh,x]\in X_G,
\end{equation}
is a well defined action, and the map  \begin{equation}
	\label{iota}
	\iota \colon X\ni x\mapsto [1,x]\in X_G,
\end{equation} is injective.

\begin{defi}\label{globa} Let $\eta$ be a  partial action of $G$ on $X.$ The action $\mu$ defined in  \eqref{action} is called the enveloping action of $\eta$   and $X_G$ is the
	enveloping space  or globalization of $X.$ \end{defi}
In the next result we summarize some basic results about the enveloping space and the enveloping action, see  \cite[Theorem 1.1]{AB},  \cite[Theorem 3.13]{KL} and \cite[Proposition 3.9]{KL}.
\begin{pro} \label{pregunta} Let $\eta$ be a partial action of $G$ on $X.$ Then the following assertions hold.
	\begin{itemize}
		\item [(i)]  The maps $\mu$ and $\iota$ are continuous.
		\item  [(ii)]  If $\eta$ is continuous and $G*X$ is open in $G\times X,$ then $\iota$ is an open map. 
	\item [(iii)] The quotient map 
	\begin{equation}\label{quo}  q:  G\times X\ni(g,x)\mapsto [g,x]\in X_G,
	\end{equation}
	is continuous and open.
	\end{itemize}

\end{pro}



Now we provide conditions for $X_G$ to be $T_1$. 

\begin{pro}\label{cert1} Let $\eta$ be a continuous partial action of $G$ on $X.$ Consider the following assertions.
\begin{itemize}
\item [(i)] $G*X$ is closed;
\item [(ii)] For any $x\in X$ the set $G^x=\{g\in G\mid \exists g\cdot x\}$   is closed;
\item [(iii)] $X_G$ is $T_1.$
\end{itemize}
Then (i)$\Rightarrow$ (ii), and   (ii)$ \Rightarrow $(iii) provided that $X$ is Hausdorff.
\end{pro}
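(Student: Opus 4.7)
The plan is to handle the two implications separately.

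The implication (i) $\Rightarrow$ (ii) is almost immediate. For each $x\in X$, the map $\iota_x\colon G\to G\times X$ defined by $\iota_x(g)=(g,x)$ is continuous, and directly from the definition of $G^x$ one has $G^x=\iota_x^{-1}(G*X)$. Since $G*X$ is closed in $G\times X$ by hypothesis, $G^x$ is closed in $G$.

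For (ii) $\Rightarrow$ (iii), the goal is to show that every singleton of $X_G$ is closed. Because the enveloping action $\mu$ acts on $X_G$ by homeomorphisms and $\mu_g([1,x])=[g,x]$, it is enough to prove that $\{[1,x]\}$ is closed for each $x\in X$. Since the quotient map $q$ of \eqref{quo} is a quotient map, this reduces to showing that $q^{-1}([1,x])$ is closed in $G\times X$. Unwinding the equivalence relation \eqref{eqgl} and applying (PA1) yields the identification
\[
q^{-1}([1,x]) \;=\; \{(h,y)\in G*X : \eta_h(y)=x\} \;=\; \{(h,h\m\cdot x) : h\m\in G^x\}.
\]
I would verify closedness by a net argument: suppose $(h_\alpha,y_\alpha)\to (h,y)$ in $G\times X$ with every $(h_\alpha,y_\alpha)$ in this set. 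Then each $h_\alpha\m$ lies in $G^x$, so continuity of inversion in $G$ together with hypothesis (ii) forces $h\m\in G^x$; in particular $h\m\cdot x$ is well defined. Continuity of $\eta$ then gives $y_\alpha=h_\alpha\m\cdot x\to h\m\cdot x$, and since $X$ is Hausdorff and also $y_\alpha\to y$, the limit is unique, so $y=h\m\cdot x$. Thus $(h,y)\in q^{-1}([1,x])$.

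The step I expect to be the main obstacle is the correct identification of $q^{-1}([1,x])$ as a ``graph-like'' set parametrized by $G^x$: one has to carefully invert the relation $R$ and apply (PA1) to pass from the condition $\eta_h(y)=x$ to the equality $y=h\m\cdot x$. Once that identification is in hand, continuity of inversion, continuity of the partial action, and uniqueness of limits in a Hausdorff space combine in a completely standard way to close the argument.
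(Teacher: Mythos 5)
Your proof is correct and follows essentially the same route as the paper: both identify the fiber of the quotient map $q$ over a point as a graph-like set parametrized by $G^x$ (the paper writes $q\m(\{[g,x]\})=\{(gl\m,l\cdot x)\mid l\in G^x\}$ for arbitrary $g$, while you reduce to $[1,x]$ via the translation homeomorphisms $\mu_g$ first), and then both close the argument with the identical net computation using closedness of $G^x$, continuity of $\eta$ on $G^x\times\{x\}$, and uniqueness of limits in the Hausdorff space $X$. The only other cosmetic difference is in (i)$\Rightarrow$(ii), where you take the preimage of $G*X$ under the continuous slice map $g\mapsto(g,x)$ instead of the paper's net argument; both are standard and equivalent.
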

\begin{proof}
  (i)$\Rightarrow$ (ii)  For a net $(g_\lambda)_{\lambda\in\Lambda}$ in $G^x$ such that  $\lim g_\lambda=g$,  one has $(g_\lambda,x)_{\lambda\in\Lambda}\to (g,x)\in \overline{G*X}=G*X$, thus $g\in G^x$ and  $G^x$ is closed.\\
    For the rest of the proof we assume that $X$ is Hausdorff.

\noindent (ii)$\Rightarrow$ (iii)  Take  $(g,x)\in G\times X,$ and let   $q$  be the quotient map defined in \eqref{quo}, then 
$$q\m(\{[g,x]\})=\bigcup_{l\in G}\{gl\m\}\times \eta_l( \{x\}\cap X_{l\m})=\{(gl\m, l\cdot x)\mid l\in G^x\}.$$ 
We prove that $q\m(\{[g,x]\})$ is closed. For this let $(h,y)\in \overline {q\m(\{[g,x]\}) },$ then there exists a net  $\{l_i\}_{i\in I}$ in $G^x$ such that $(gl\m_i, l_i\cdot x)\to (h,y), $ in particular $l_i\to h\m g\in G^x.$ Set $\eta^x: G^x\ni g\mapsto g\cdot x\in X,$  using the fact that  $\eta^x$ is continuous one gets 
$l_i\cdot x\to (h\m g)\cdot x,$ and $y=(h\m g)\cdot x$ because of the uniqueness
of limits in Hausdorff spaces. 
 From this we obtain $$(h,y)=(g(h\m g)\m,(h\m g)\cdot x )\in q\m(\{[g,x]\}),$$  thus  $X_G$ is $T_1.$ 
 \end{proof}
\begin{rem}\label{notrec}With respect to Proposition \ref{cert1} we have the next.
	  \begin{itemize}
		\item The space  $X_G$ is $T_1$ when $G$ is discrete and $X$ is Hausdorff.
		\item  Part (ii) $\Rightarrow$ (i)  does not necessarily holds. 
		Indeed, for  the partial action  of $\Z_2=\{1,-1\}$ on $X=[0,1]$ presented in \cite[Example 1.4.]{AB} that is $\alpha_1={\rm id}_X$ and $\alpha_{-1}={\rm id}_V,$ where $V=(0,1].$ One has that  $\Z_2^x$ is closed for any $x\in [0,1]$ while $\Z_2*[0,1]=\{(-1,0)\} \cup( \Z_2\times V)$ is not closed in $\Z_2\times [0,1].$
		\item Also part (iii) $\Rightarrow$ (ii) does not hold in general, for this let $G=GL(2;\mathbb{R})$ be the  general linear group of degree $2$ acting  partially   on $\mathbb{R}$ as follows: For  $g=	\begin{pmatrix}
			a & b \\
			c & d 
		\end{pmatrix}\in G$, set $\mathbb R_{g^{-1}}=\{x\in \mathbb{R}: cx+d\neq 0\}$ and  $\eta_g:\mathbb R_{g^{-1}}\ni x\mapsto \displaystyle\frac{ax+b}{cx+d} \in \mathbb R_g$. There is a homeomorphism from $\mathbb R_G$ to  the space $\mathbb C$ of complex numbers, then   $\mathbb R_G$ is  Hausdorff but $G^0=\left\lbrace\begin{pmatrix}
			a & b \\
			c & d 
		\end{pmatrix}\in G: d\neq 0\right\rbrace$ is not closed in $G.$ 
	\end{itemize} 
\end{rem}

\begin{defi} Suppose that the spaces $X$ and $Y$ are equipped with  partial actions $\eta$ and $\rho$ by $G.$ A function $\epsilon: X \to Y$ is called a or $G$-map if for every $(g,x)\in  G*X$  then   $(g,\epsilon(x))\in G*Y$ and $\epsilon \eta(g,x) = \rho (g,\epsilon(x)).$ If moreover $\epsilon$ is a homeomorphism we say that $X$ and $Y$ are $G$-equivalent.
\end{defi}

We have the next.

\begin{pro} \label{kellen}The following assertions hold. 
\begin{enumerate}
\item  [(i)] Let $X$ and $Y$ be two $G$-equivalent spaces. Then    $X_G$ and $Y_G$ are homeomorphic, as well as $G*X$ and $G*Y.$
\item [(ii)]  Let $\beta:G\times Y\rightarrow Y$ be a continuous action of $G$  on a space $Y.$ Let    $X\subseteq Y$  be an open set such that  $G\cdot X= Y$ and  $\eta:G*X\rightarrow X$ be the induced partial action of $\beta$ on $X$ (see Example \ref{indu}). Then the spaces  $X_G$ and  $Y$ are  $G$-equivalent.
\end{enumerate}\end{pro}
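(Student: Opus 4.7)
For \textbf{(i)}, the plan is to lift the $G$-equivalence $\epsilon\colon X\to Y$ to a map $\tilde{\epsilon}\colon X_G\to Y_G$ via the natural formula $\tilde{\epsilon}([g,x])=[g,\epsilon(x)]$. Well-definedness follows by unpacking the relation in~\eqref{eqgl}: if $(g,x)R(h,y)$ then $x\in X_{g^{-1}h}$ (equivalently, $(h^{-1}g,x)\in G*X$) and $\eta_{h^{-1}g}(x)=y$, so the $G$-function property forces $(h^{-1}g,\epsilon(x))\in G*Y$, i.e.\ $\epsilon(x)\in Y_{g^{-1}h}$, together with $\rho_{h^{-1}g}(\epsilon(x))=\epsilon(\eta_{h^{-1}g}(x))=\epsilon(y)$, giving $(g,\epsilon(x))R(h,\epsilon(y))$. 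Continuity is immediate from the universal property of the quotient map $q_X$, since $\tilde{\epsilon}\circ q_X=q_Y\circ(\mathrm{id}_G\times\epsilon)$ is continuous on $G\times X$. Applying the same construction to the inverse $G$-equivalence $\epsilon^{-1}\colon Y\to X$ produces a continuous two-sided inverse. The second statement, that $G*X$ and $G*Y$ are homeomorphic, is obtained simply by restricting the homeomorphism $\mathrm{id}_G\times\epsilon\colon G\times X\to G\times Y$, using the same $G$-function property to see that it maps $G*X$ onto $G*Y$.

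For \textbf{(ii)}, I would define $\Phi\colon X_G\to Y$ by $\Phi([g,x])=\beta_g(x)$. Well-definedness is a direct unwinding: if $(g,x)R(h,y)$ then $y=\eta_{h^{-1}g}(x)=\beta_{h^{-1}g}(x)$, so $\beta_g(x)=\beta_h(y)$. Equivariance, namely $\Phi(\mu(g',[g,x]))=\beta_{g'g}(x)=\beta_{g'}\Phi([g,x])$, is immediate from the formula. Surjectivity is exactly the hypothesis $G\cdot X=Y$, and for injectivity, $\beta_g(x)=\beta_h(y)$ with $x,y\in X$ yields $\beta_{h^{-1}g}(x)=y\in X$; combined with $x\in X$, this forces $x\in X\cap\beta_{g^{-1}h}(X)=X_{g^{-1}h}$, whence $(g,x)R(h,y)$ and $[g,x]=[h,y]$.

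It remains to show $\Phi$ is a homeomorphism. Continuity follows from $\Phi\circ q=\beta|_{G\times X}$, the composition of the inclusion $G\times X\hookrightarrow G\times Y$ with the continuous action $\beta$, so the universal property of the quotient $q$ gives continuity of $\Phi$. For openness, the key observation is that $\beta$ is always an open map: a basic open $U\times V\subseteq G\times Y$ maps to $\bigcup_{g\in U}\beta_g(V)$, a union of open sets because each $\beta_g$ is a homeomorphism. Since $G\times X$ is open in $G\times Y$, the restriction $\beta|_{G\times X}=\Phi\circ q$ is still open; combined with surjectivity and continuity of $q$, this forces $\Phi$ itself to be open, hence a homeomorphism. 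The only mildly subtle step is this openness argument, which I expect to be the main point requiring the geometric input that any group action is open.
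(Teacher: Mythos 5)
Your proposal is correct and takes essentially the same approach as the paper: for (ii) you define the very same map $[g,x]\mapsto\beta(g,x)$, get continuity from the commutative square with the quotient map $q$, and get openness of this map from the openness of the action $\beta$, which is precisely the paper's argument. The only difference is one of detail: you verify well-definedness and injectivity by hand where the paper invokes \cite[Proposition 3.5]{KL}, and you write out part (i) (lifting $\epsilon$ to $[g,x]\mapsto[g,\epsilon(x)]$), which the paper dismisses as clear; both verifications are correct.
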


\begin{proof} Part (i) is clear, for (ii) let $i:G\times X\to G\times Y$ be the inclusion and  $\alpha:X_G\ni [g,x]\mapsto \beta(g,x)\in Y$, then the following diagram 
		\begin{center}
		$\xymatrix{G\times X \ar[d]_-{q} \ar[r]^-{i}& G\times Y\ar[d]^-{\beta}\\
			X_G\ar[r]_-{\alpha}& Y
		}$
	\end{center}
is commutative. Moreover, by \cite[Proposition 3.5]{KL} the map $\alpha$ is a well defined bijection, moreover it is continuous because  the map $\alpha\circ q$ is continuous. Also, since  $\beta$ is open the map  $\alpha$ is a homeomorphism, finally the fact that it  is a $G$-map is straighforward.
\end{proof}
\subsection{The orbit equivalence relation}
Given a  partial action   $\eta$ of $G$ on $X$  the {\it orbit equivalence relation}  $\sim_G$ on $X$ is:
\begin{equation}\label{porbit}x\sim_G y\Longleftrightarrow\,\, \exists g\in G^x  \,\,{\rm such \,\,that}\,\, g\cdot x=y,\end{equation}
for each $x,y\in X$. The elements of $X/\!\sim_G$ are the {\it orbits} $G^x\cdot x$ with  $x\in X$  and $X/\!\sim_G$ is endowed with the quotient topology. By \cite[Lemma 3.2]{PU1} the {\it induced quotient map of $\eta$} 
\begin{equation}\label{qmap}\pi_G: X\ni x\mapsto G^x\cdot x \in X/\!\sim_G,\end{equation} is continuous and open. 

It is known that  globalizations of topological spaces endowed with a partial action can be seen as orbit equivalence spaces. Indeed the following result was shown in \cite[Theorem 3.3]{PU1}.

\begin{teo} \label{globor}Let  $\eta$ be  a topological partial action   of $G$ on $X,$ then the family $\hat{\eta}=\{\hat{\eta}_g: (G\times X)_{g^{-1}}\rightarrow (G\times X)_g\}_{g\in G},$  where $(G\times X)_g=G\times X_g$ and 
	$$\hat{\eta}_g: G\times X_{g\m}\ni (h,x) \mapsto (hg^{-1},\eta_g(x)) \in G\times X_g,$$ 
	is a topological  partial action of $G$ on $G\times X,$ and the enveloping space   $X_G$ of $\eta$ is the space of orbits  of $G \times X$ by $\hat \eta.$ 
\end{teo}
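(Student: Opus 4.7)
The plan is threefold: verify the axioms of Proposition \ref{fam} for $\hat\eta$, check that each $\hat\eta_g$ is a homeomorphism on an open domain with jointly continuous action, and finally identify the orbit equivalence relation of $\hat\eta$ with the relation $R$ defining $X_G$ in \eqref{eqgl}.

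For the first step, condition (i) is immediate because $(G\times X)_1 = G\times X_1 = G\times X$ and $\hat\eta_1(h,x) = (h, \eta_1(x)) = (h,x)$. For condition (ii), the second coordinate of $\hat\eta_g$ is $\eta_g$ and the first is the translation $h\mapsto hg^{-1}$, which is a bijection of $G$, so using $\eta_g(X_{g^{-1}}\cap X_h) = X_g\cap X_{gh}$ from Proposition \ref{fam} one gets
\[
\hat\eta_g\bigl((G\times X)_{g^{-1}} \cap (G\times X)_h\bigr) = G\times (X_g \cap X_{gh}) = (G\times X)_g \cap (G\times X)_{gh}.
\]
Condition (iii) reduces to the composition identity $\hat\eta_g\hat\eta_h(k,x) = \hat\eta_g(kh^{-1},\eta_h(x)) = (k(gh)^{-1}, \eta_{gh}(x)) = \hat\eta_{gh}(k,x)$ on $G\times(X_{h^{-1}}\cap X_{h^{-1}g^{-1}})$, which matches the intersection of domains on the nose.

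For the second step, each $(G\times X)_g = G\times X_g$ is open in $G\times X$ because $X_g$ is open; and $\hat\eta_g$ is a homeomorphism as the product of the translation homeomorphism on $G$ (translations are homeomorphisms since $G$ is a topological group) and the homeomorphism $\eta_g$. Joint continuity of $\hat\eta$ on $G*(G\times X)$ follows from the formula $(g,(h,x))\mapsto (hg^{-1},\eta_g(x))$ together with continuity of multiplication and inversion on $G$ and continuity of $\eta$.

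For the third step, the orbit of $(g,x)$ under $\hat\eta$ is
\[
\{\hat\eta_k(g,x) : x\in X_{k^{-1}}\} = \{(gk^{-1},\eta_k(x)) : k\in G^x\},
\]
so $(g,x)\sim^{\hat\eta}_G (h,y)$ iff there is $k$ with $h = gk^{-1}$ and $y = \eta_k(x)$, equivalently $k = h^{-1}g$, $x\in X_{g^{-1}h}$ and $y = \eta_{h^{-1}g}(x)$, which is exactly the relation $R$ from \eqref{eqgl}. Since the set-theoretic partitions of $G\times X$ induced by $\sim^{\hat\eta}_G$ and by $R$ coincide, and both quotients carry the quotient topology from $G\times X$, the identity on $G\times X$ descends to a homeomorphism $(G\times X)/\!\sim^{\hat\eta}_G \to X_G$. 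The main obstacle is not conceptual but bookkeeping: keeping exponents and inverses straight when verifying (ii), (iii) and when matching the orbit relation with $R$ is the only point requiring care.
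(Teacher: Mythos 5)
Your proposal is correct: the verification of conditions (i)--(iii) of Proposition \ref{fam} for $\hat{\eta}$, the topological checks (openness of $G\times X_g$, $\hat{\eta}_g$ as a product of a right translation with $\eta_g$, joint continuity), and the identification of the $\hat{\eta}$-orbit relation with the relation $R$ of \eqref{eqgl} are all accurate. The paper itself gives no proof of Theorem \ref{globor} (it is quoted from \cite[Theorem 3.3]{PU1}), and your direct verification is exactly the standard argument, so there is nothing substantive to compare.
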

Let $\eta$ be a partial action of $G$ on $X,$  and $H$ be a subgroup of $G,$ then  the family $\eta_H: \{\eta_h\colon X_{h\m}\to X_h\}_{h\in H}$ is a partial action of $H$ on $X.$ The corresponding orbit equivalence relation of $\eta_H$ is denoted by $\sim_H$. 

For convenience,  the orbits in the space $X_G/\!\sim_H$ will be denoted by $H[g,x]$ for any $[g,x]\in X_G$. We finish this section with the next.

\begin{lem}\label{lemaaux}
	Let $\eta$ be a continuous partial action of $G$ on $X$ with $G*X$ open. Then for each subgroup $H$ of $G$ the map 
	\begin{equation}\label{var}\varphi:X/\!\sim_H \ni H^x\cdot x\mapsto H[1,x]\in X_G/\sim_H \end{equation} is an embedding, that is  continuous, open and injective. \end{lem}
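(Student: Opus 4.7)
The plan is to realize $\varphi$ as a factorization of the composition $\tilde{\pi}_H\circ \iota\colon X\to X_G/\!\sim_H$ through the quotient $\pi_H$, where $\iota$ is the open continuous injection from \eqref{iota} and $\tilde{\pi}_H$ denotes the (continuous and open) canonical map $X_G\to X_G/\!\sim_H$, and then to verify the three required properties in turn.

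For well-definedness and continuity, I would check that $\tilde{\pi}_H\circ \iota$ is constant on $\sim_H$-classes in $X$: if $y=h\cdot x$ with $h\in H^x$, then $(1,y)=(1,h\cdot x)$ is $R$-equivalent to $(h,x)$ via \eqref{eqgl}, so $[1,y]=[h,x]=\mu(h,[1,x])$ by \eqref{action}; hence $\iota(y)$ and $\iota(x)$ lie in the same $H$-orbit in $X_G$. The universal property of $\pi_H$ then yields a continuous map $\varphi$ satisfying \eqref{var}. For injectivity, suppose $H[1,x]=H[1,y]$; then some $h\in H$ satisfies $[1,y]=\mu(h,[1,x])=[h,x]$, and applying \eqref{eqgl} once more forces $x\in X_{h\m}$ and $\eta_h(x)=y$, so $y\in H^x\cdot x$, i.e.\ $H^x\cdot x=H^y\cdot y$.

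For openness, let $U\subseteq X/\!\sim_H$ be open. A direct unwinding of the definitions gives
\[
\tilde{\pi}_H^{-1}(\varphi(U))\;=\;\bigcup_{h\in H}\mu_h\bigl(\iota(\pi_H^{-1}(U))\bigr),
\]
where $\mu_h\colon [g,z]\mapsto [hg,z]$ is a self-homeomorphism of $X_G$. Since $\pi_H$ is continuous, $\pi_H^{-1}(U)$ is open in $X$; since $\iota$ is open, $\iota(\pi_H^{-1}(U))$ is open in $X_G$; each $\mu_h$ preserves openness, so the displayed union is open in $X_G$. By definition of the quotient topology on $X_G/\!\sim_H$, the image $\varphi(U)$ is then open.

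The main point of care will be keeping the two equivalence relations disentangled: the relation $R$ defining $X_G$ and the $H$-orbit relation on $X_G$ each involve a choice from $G$ respectively $H$, and the identity $[1,h\cdot x]=[h,x]=\mu(h,[1,x])$ — which is precisely what allows the two relations to interact compatibly — must be invoked in opposite directions, once for well-definedness/continuity and once for injectivity. Once this bookkeeping is kept straight, all three assertions reduce to unpacking the definitions, and the hypothesis that $G\ast X$ is open enters only through its consequence that $\iota$ in \eqref{iota} is an open map.
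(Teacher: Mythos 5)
Your proposal is correct and follows essentially the same route as the paper: well-definedness via the identity $[1,h\cdot x]=[h,x]=\mu_h([1,x])$, continuity from the identity $\varphi\circ\pi_H=\Pi_H\circ\iota$ and the universal property of the quotient, and openness from the factorization $\varphi(U)=\Pi_H(\iota(\pi_H^{-1}(U)))$. The only (harmless) differences are that you verify the openness of the orbit map $\Pi_H$ inline by writing the saturation $\Pi_H^{-1}(\varphi(U))=\bigcup_{h\in H}\mu_h(\iota(\pi_H^{-1}(U)))$ rather than citing that orbit maps are open, and you spell out the injectivity step that the paper dismisses as easy.
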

\begin{proof}
	First of all note that $\varphi$ is well defined. In fact, let $x,y\in X$ be such that $x\sim_H y$ and take  $h\in H^x$ with $\eta_h(x)=y$. Thus, 
	$[1,y]\stackrel{\eqref{eqgl}}=[h,x]\stackrel{\eqref{action}}=\mu_h([1,x])$
	and $[1,y]\sim_H[1,x],$ then $\varphi$ is well defined. It is easy to check that $\varphi$ is injective. To prove that $\varphi$ is continuous, consider $\pi_H: X\rightarrow X/\!\sim_H$ and $\Pi_H:X_G\rightarrow X_G/\!\sim_H$ the  corresponding quotient maps. Since the map $\iota$ defined in \eqref{iota}  is continuous and   $\varphi\circ \pi_H=\Pi_H\circ \iota$ we conclude that $\varphi$ is continuous. It remains to check that $\varphi$ is open, let $U\subseteq X/\sim_H$ be an  open set, then $\varphi(U)=\Pi_H(\iota(\pi_H^{-1}(U)))$  is open because  $\pi_H^{-1}(U)$ is open in $X$ and the functions $\iota$ and $\Pi_H$ are open thanks to Proposition \ref{pregunta} and  \cite[Lemma 3.2]{PU1}, respectively. Therefore $\varphi$ is an open map.
\end{proof}
\section{Properties preserved by the envelo\-ping space and invariant metrics}\label{parprof}
Recall that a continuous surjection $f : X \rightarrow Y$ is {\it perfect} if it is closed  and $f\m(\{y\})$ is compact for all $y \in Y.$


We proceed with the next.

\begin{lem}\label{cerrada} Let  $\eta:G*X\rightarrow X$ be a  continuous partial action such that  $G*X$ is closed in $G\times X$ and  $G$ is compact, then the following assertions hold.
\begin{enumerate}
\item [(i)]  $\eta$ is closed;
\item [(ii)] The maps $\pi_G$ and $\hat{\pi}_G$  are perfect, being  $\hat{\pi}_G$ the corresponding quotient map of $\hat\eta$ in Theorem \ref{globor} .

\end{enumerate}
\end{lem}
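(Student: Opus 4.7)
The plan is to first establish (i) by exploiting compactness of $G$ to get a closed projection $G\times X\to X$, and then derive (ii) by a quotient-topology argument together with the fact that each slice $G^x$ is a closed, hence compact, subset of $G$.

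For (i), I would consider the continuous ``graph-straightening'' map $\phi\colon G*X\to G\times X$ given by $\phi(g,x)=(g,\eta(g,x))$. Its image is exactly $\{(g,y)\mid y\in X_g\}=\tau^{-1}(G*X)$, where $\tau\colon G\times X\to G\times X$, $(g,y)\mapsto (g^{-1},y)$, is a homeomorphism; since $G*X$ is closed by hypothesis, this image is closed in $G\times X$. The inverse of $\phi$ onto its image is $(g,y)\mapsto(g,\eta(g^{-1},y))$, which is continuous by continuity of $\eta$; hence $\phi$ is a homeomorphism onto a closed subset of $G\times X$. Given any closed $F\subseteq G*X$, the set $\phi(F)$ is therefore closed in $G\times X$, and writing $\eta=\mathrm{pr}_X\circ \phi$ with $\mathrm{pr}_X\colon G\times X\to X$ the projection (which is closed because $G$ is compact) yields that $\eta(F)=\mathrm{pr}_X(\phi(F))$ is closed in $X$.

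For (ii), I would show that $\pi_G$ is closed with compact fibers. If $F\subseteq X$ is closed then
\[
\pi_G^{-1}(\pi_G(F))=\eta\bigl((G*X)\cap(G\times F)\bigr),
\]
and the set on the right is closed in $X$ by (i) since $(G*X)\cap(G\times F)$ is closed in $G*X$; by definition of the quotient topology, $\pi_G(F)$ is closed in $X/\!\sim_G$. For the fibers, $\pi_G^{-1}(G^x\cdot x)=\eta(G^x\times\{x\})$; the slice $G^x=\{g\in G\mid(g,x)\in G*X\}$ is the preimage of the closed set $G*X$ under the continuous map $g\mapsto(g,x)$, hence closed in the compact group $G$ and therefore compact, so the fiber is compact as a continuous image. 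To conclude the same for $\hat\pi_G$, I would verify that $\hat\eta$ satisfies the same hypotheses on $G\times X$: its domain is $\{(g,h,x)\mid x\in X_{g^{-1}}\}=\sigma^{-1}(G*X)$ for the projection $\sigma\colon G\times G\times X\to G\times X$, $(g,h,x)\mapsto(g,x)$, and hence closed in $G\times(G\times X)$. Since $X_G$ is precisely the orbit space of $\hat\eta$ by Theorem \ref{globor}, the argument for $\pi_G$ applied to $\hat\eta$ yields that $\hat\pi_G$ is perfect.

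The main obstacle is really just step (i): one has to spot the right way to insert $G*X$ into $G\times X$ (namely, the homeomorphism $\phi$ onto the closed set $\tau^{-1}(G*X)$) so that compactness of $G$ can be harnessed via the closed projection $\mathrm{pr}_X$. Once (i) is in hand, the closedness and compact-fibers statements in (ii) follow mechanically from the standard quotient-topology criterion and the closedness of the slices $G^x$.
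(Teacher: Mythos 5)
Your proof is correct, but your route to part (i) is genuinely different from the paper's. The paper argues with nets: given a closed $C\subseteq G*X$ and $y\in\overline{\eta(C)}$, it uses compactness of $G$ to pass to a subnet with $g_\lambda\to g$, uses closedness of $G*X$ to get $(g^{-1},y)\in G*X$, and then rewrites the net as $(g_\lambda,\,g_\lambda^{-1}\cdot(g_\lambda\cdot x_\lambda))$ to conclude its limit $(g,g^{-1}\cdot y)$ lies in $C$, whence $y\in\eta(C)$. You instead factor $\eta$ through its graph: $\phi(g,x)=(g,\eta(g,x))$ is a homeomorphism of $G*X$ onto the closed set $\tau^{-1}(G*X)$, and $\eta=\mathrm{pr}_X\circ\phi$ is then closed because projection along a compact factor is closed (Kuratowski's closed-projection theorem). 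This is more structural and avoids net manipulation, at the cost of invoking the closed-projection theorem, which the paper's argument in effect re-proves in this special case. For part (ii), your treatment of $\pi_G$ coincides with the paper's: the same identity $\pi_G^{-1}(\pi_G(F))=\eta\bigl((G\times F)\cap(G*X)\bigr)$ and the same compactness of the fibers $G^x\cdot x$, except that you prove $G^x$ closed directly (preimage of $G*X$ under $g\mapsto(g,x)$) where the paper cites Proposition \ref{cert1}. For $\hat\pi_G$ the paper leans on external results of \cite{MPV} (continuity of $\hat\eta$ and closedness of $G*(G\times X)$), whereas your one-line identification of the domain of $\hat\eta$ with $\sigma^{-1}(G*X)$, $\sigma(g,h,x)=(g,x)$, makes that step self-contained -- a genuine simplification. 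The only point you leave implicit is that $\hat\eta$ is itself continuous, which your argument needs in order to apply (i) to it; this is immediate from the formula $\hat\eta(g,(h,x))=(hg^{-1},\eta(g,x))$, but it should be stated.
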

\begin{proof}
(i). Let  $C$ be a nonempty closed subset of $G*X$   and $y\in \overline{\eta(C)},$ then there is a directed set $\Lambda$ and a net $(g_\lambda,x_\lambda)_{\lambda\in \Lambda}$ in $C$ such that $\lim g_\lambda\cdot x_\lambda =y$. Since $G$ is compact, we can suppose that $\lim g_\lambda =g$, for some $g\in G$. Note that $(g_{\lambda}^{-1}, g_{\lambda}\cdot x_{\lambda})_{\lambda\in \Lambda}$ is a net in $G*X$ and $\lim (g_{\lambda}^{-1}, g_{\lambda}\cdot x_{\lambda})= (g^{-1},y),$ then $(g^{-1},y)\in G*X$ because this is a  closed subset of $G\times X$. Now consider the net $(g_\lambda,x_\lambda)_{\lambda\in \Lambda}=(g_\lambda, g_{\lambda}^{-1}\cdot( g_{\lambda}\cdot x_{\lambda}))_{\lambda\in \Lambda}$ in $C,$ then 
$$(g,g^{-1}\cdot y)=\lim (g_\lambda, g_{\lambda}^{-1}\cdot( g_{\lambda}\cdot x_{\lambda}))=\lim (g_\lambda,x_\lambda)\in C ,$$  and $y=g\cdot (g^{-1}\cdot y)=\eta(g,g^{-1}\cdot y)\in \eta(C)$ which implies that $\eta$ is closed.

\noindent (ii) The map  $\pi_G$ is closed because of (i) above and  the equality $\pi\m_G( \pi_G(F))=\eta((G\times F)\cap (G*X) ),$ for any closed subset $F$ of $X.$ Hence to prove our assertion we need to check that $\pi_G^{-1}(\pi_G(x))$ is a compact for any $x\in X$. First, by Proposition \ref{cert1} we have that $G^x$ is a  compact subset $G$,  then $\pi_G^{-1}(\pi_G(x))=G^x\cdot x=\eta(G^x\times \{x\})$ is a compact subset of $X$. To show that $\hat{\pi}_G$ is closed we have  by 
\cite[Proposition 2.6]{MPV} that the map  $\hat{\eta}$ is continuous, moreover from \cite[Corollary 3.3]{MPV} we get that $G*(G\times X)$ is closed in  $G\times (G\times X)$ then  the result follows. 
\end{proof}

\begin{teo}\label{propiedades}
Let $G$ be a compact group and $\eta:G*X\rightarrow X$ be a  continuous partial action such that  $G*X$ is closed in $G\times X$.  Let $\mathcal{P}$ be any of the properties: Hausdorff, regular, metrizable and second countable. Then the following statements hold.
\begin{enumerate}
 \item [(i)] If $X$ is $\mathcal{P},$ then $X/\!\sim_G$ is $\mathcal{P}.$ 
\item [(ii)] If $G\times X$ is $\mathcal{P},$  then   $X_G$ is $\mathcal{P}.$ 
\end{enumerate}
\end{teo}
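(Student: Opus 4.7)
The plan is to reduce both parts to a single statement about perfect continuous surjections, namely: if $f\colon Z\to W$ is a perfect continuous surjection and $Z$ has one of the listed properties $\mathcal P$, then so does $W$. The reduction is furnished by Lemma \ref{cerrada}: for (i) take $f=\pi_G$, and for (ii) invoke Theorem \ref{globor} to identify $X_G$ with the orbit space of the partial action $\hat\eta$ on $G\times X$ and take $f=\hat\pi_G$, which is perfect once Lemma \ref{cerrada} is applied to $\hat\eta$ on the closed domain $G*(G\times X)\subseteq G\times(G\times X)$.

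I would then handle the four preservation statements in turn. For the Hausdorff case, given $y_1\neq y_2$ in $W$, the fibers $f^{-1}(y_1)$ and $f^{-1}(y_2)$ are disjoint compact subsets of the Hausdorff space $Z$, so they admit disjoint open neighborhoods $U_1,U_2$; the saturations $V_i=W\setminus f(Z\setminus U_i)$ are then disjoint open neighborhoods of $y_1,y_2$ in $W$ by closedness of $f$. The regular case is analogous, replacing one fiber by the preimage of an arbitrary closed set avoiding $y$. For metrizability I would invoke the Hanai--Morita--Stone theorem, which grants $W$ metrizable provided $W$ is Hausdorff (already in hand) and arises as a perfect image of a metrizable space. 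For second countability, given a countable basis $\mathcal B$ of $Z$, I would check that the countable family $\{W\setminus f(Z\setminus(B_1\cup\cdots\cup B_n)) : n\in\mathbb{N},\ B_i\in\mathcal B\}$ forms a basis of $W$, using compactness of the fibers to cover each $f^{-1}(y)$ by finitely many basic opens contained in any prescribed open saturation, and then pushing these finite unions down via the closedness of $f$.

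The main obstacle I foresee is essentially the metrizability case, which rests on the non-trivial Hanai--Morita--Stone theorem; the other three properties reduce to elementary manipulations with closed maps and compact fibers. A minor bookkeeping point for (ii) is to confirm that the hypotheses of Lemma \ref{cerrada} apply to $\hat\eta$, i.e.\ that $G*(G\times X)$ is closed in $G\times(G\times X)$, but this is precisely what was already invoked inside the proof of Lemma \ref{cerrada}(ii), so the verification consists only of citing it.
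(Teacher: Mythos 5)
Your proposal is correct and follows essentially the same route as the paper: both reduce (i) to the perfectness of $\pi_G$ from Lemma \ref{cerrada} and (ii) to the perfectness of $\hat{\pi}_G$ together with the identification of $X_G$ as the orbit space of $\hat{\eta}$ on $G\times X$ from Theorem \ref{globor}. The only difference is that the paper delegates the preservation of the four properties under perfect maps to a citation of Dugundji, whereas you supply direct proofs of those preservation facts (invoking Hanai--Morita--Stone for metrizability), which is a self-contained but mathematically equivalent way of discharging the same step.
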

\begin{proof} (i)  This follows from item (ii) in  Lemma \ref{cerrada} and \cite[Theorem 5.2]{du} while  (ii)  is a consequence of  item (ii) in  Lemma \ref{cerrada},  item (i) above  and the last assertion in Theorem \ref{globor}.
\end{proof}

%
\begin{rem}\label{cernes} We remark the following facts.
\begin{enumerate}
\item [(i)] In general the assumption that  $G*X$ is closed in $G\times X$ cannot be removed in   part (ii) of Theorem \ref{propiedades}.   Indeed, for  the Abadie's partial action  of $\Z_2=\{1,-1\}$ on $X=[0,1]$ presented in Remark \ref{notrec}  we have by Proposition \ref{cert1}  that the  space $X_{\Z_2}$ is $T_1$ but not Hausdorff. 
\item   [(ii)] Also  the fact that $X_G$ is Hausdorff does not imply that $G$ is compact, for instance in \cite[Proposition 2.1]{Ruy2}  a characterization for $X_G$ to be Hausdorff is presented in the case when $G$ is countable and discrete.
\end{enumerate}
\end{rem}
We illustrate the previous theorem with some examples.
	\begin{exe}\label{ejeisometr1}
	Consider $X=\mathbb{R}\setminus \{0\}$ as a subspace of   $\mathbb{R}$. A partial action of $\Z_3$ on $X$ is defined as follows. Let  $X_1=(-\infty,0)$ and  $X_2=(0,\infty),$  note that $X_1$ and $X_2$ are clopen subsets of $X$ such that $X=X_1\cup X_2$. Set  $\eta_0={\rm id}_X,$ $\eta_2: X_1\ni x\mapsto -x\in X_2$ and  $\eta_1=\eta\m_2,$ moreover let $$\mathbb{Z}_3*X=(\{0\}\times X)\cup(\{1\}\times X_2)\cup (\{2\}\times X_1),$$  Then  $\eta: \mathbb{Z}_3*X\rightarrow X$,   is  a partial action of $ \mathbb{Z}_3$ on  $X$ such that  $\mathbb{Z}_3*X$ is clopen in  $\mathbb{Z}_3\times X$ thus by  Theorem \ref{propiedades}  the enveloping space $X_{\mathbb{Z}_3}$ is metrizable.
\end{exe}
\begin{exe} \label{ejeisometr2}
	Let $X$ be a disconnected space and $U\subseteq X$ be a non-empty clopen subset of $X$ with $U\neq X.$  Then $\eta: \Z_2* X\rightarrow X$ is a partial action of $\Z_2$ on $X$ where  $\mathbb{Z}_2* X=(\{0\}\times X)\cup (\{1\}\times U)$, and $\eta(1,u)=u$ for any $u\in U $. Since  $\mathbb{Z}_2* X$  is closed in $\mathbb{Z}_2\times X$ we conclude that $X_{\mathbb{Z}_2}$ is metrizable.
\end{exe}

In view of (ii) in Remark \ref{cernes} we give  the next.
\begin{pro}\label{caracteri} Let $G$ be a compact group, $X$ be a compact Hausdorff  space  and 
	$\eta: G*X\rightarrow X$ be a partial action. If $X_G$ is Hausdorff, then  $G*X$ is closed.
\end{pro}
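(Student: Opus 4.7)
The plan is to exploit the identification
\[
G*X \;=\; q^{-1}(\iota(X)),
\]
where $q:G\times X\to X_G$ is the continuous quotient map from \eqref{quo} and $\iota:X\to X_G$, $x\mapsto [1,x]$, is the map from \eqref{iota}. Granting this identity, the proof is immediate: since $X$ is compact and $\iota$ is continuous, $\iota(X)$ is a compact subset of the Hausdorff space $X_G$, hence closed in $X_G$; the continuity of $q$ then forces $G*X = q^{-1}(\iota(X))$ to be closed in $G\times X$.

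To verify the identification, recall that $(g,x)\in G*X$ is by definition equivalent to $x\in X_{g^{-1}}$. If $(g,x)\in G*X$ and we set $y=\eta_g(x)$, then unpacking \eqref{eqgl} gives $(g,x)R(1,y)$, so $q(g,x)=[g,x]=[1,y]=\iota(y)\in\iota(X)$. Conversely, if $q(g,x)=[1,y]$ for some $y\in X$, then $(g,x)R(1,y)$ and \eqref{eqgl} again forces $x\in X_{g^{-1}}$, i.e.\ $(g,x)\in G*X$.

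The only real step is the tautological identification $G*X=q^{-1}(\iota(X))$, which is just an unwinding of the equivalence relation in \eqref{eqgl}, so I do not foresee any serious obstacle. The role of the hypotheses is transparent: compactness of $X$ turns $\iota(X)$ into a compact set, and the Hausdorff assumption on $X_G$ promotes compact subsets to closed subsets. Observe that compactness of $G$ itself is not actually needed for the argument, although it is part of the standing hypotheses of the proposition; this matches the phenomenon in Remark \ref{cernes}(ii), where non-compact $G$ can still give a Hausdorff $X_G$.
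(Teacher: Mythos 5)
Your proof is correct, and it takes a genuinely different route from the paper's. The paper argues via the graph: it invokes \cite[Proposition 1.2]{AB} to get that Hausdorffness of $X_G$ forces ${\rm Graph}(\eta)=\{(g,x,y): (g,x)\in G*X,\ g\cdot x=y\}$ to be closed in $G\times X\times X$, uses compactness of $G\times X\times X$ (this is where compactness of $G$ enters) to conclude the graph is compact, and then finishes with a net--subnet argument: a net in $G*X$ converging to $(g,x)$ lifts to a net in the graph, a convergent subnet has its limit $(g,x,p)$ in the closed graph, hence $(g,x)\in G*X$. Your argument instead rests on the identity $G*X=q^{-1}(\iota(X))$, which you verify correctly by unwinding \eqref{eqgl}: $(g,x)R(1,y)$ holds for some $y$ precisely when $x\in X_{g^{-1}}$. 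From there only three facts are needed, all already established in the paper: $\iota$ is continuous (so $\iota(X)$ is compact), compact subsets of the Hausdorff space $X_G$ are closed, and $q$ is continuous. Each approach buys something: the paper's proof connects the statement to the graph-closedness characterization of Hausdorff enveloping spaces, which is the conceptual thread running through \cite{AB} and Remark \ref{cernes}; yours is shorter, self-contained, and strictly more general, since, as you observe, compactness of $G$ is never used --- only compactness of $X$ and Hausdorffness of $X_G$. Moreover, since $q$ is also open, your identity yields the sharper equivalence that $G*X$ is closed in $G\times X$ if and only if $\iota(X)$ is closed in $X_G$, a fact worth recording independently of any compactness hypothesis.
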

\begin{proof}
	Let  $\{(g_\lambda,x_\lambda)\}_{\lambda\in \Lambda}$  be a net in $G*X$ such that $\lim (g_\lambda,x_\lambda)=(g,x)\in G\times X$. Since $X_G$ is Hausdorff, we have by \cite[Proposition 1.2]{AB} that the space Graph$(\eta)=\{(g,x,y)\in G\times X\times X: (g,x)\in G*X,\ g\cdot x=y\}$ is a closed subset of  $G\times X\times X$, and thus compact. Therefore  we may assume that $(g_\lambda,x_\lambda,g_\lambda\cdot x_\lambda)_{\lambda\in\Lambda}$ converges to  $(g,x,p)\in$ Graph$(\eta)$, for some  $p\in X$. In  particular, $(g,x)\in G*X$ and  $G*X$ is closed.
\end{proof}
Having at hand Proposition  \ref{caracteri} one may ask if  its assumptions imply that if the orbit space $X/\!\sim_G$ is  Hausdorff then  $G*X$ is closed in $G\times X$. But this is not the case as Example \ref{notclo} below shows. 
\begin{exe}\label{notclo}  Consider again the partial action  of $\Z_2$ on $X=[0,1]$ given in \cite[Example 1.4.]{AB}.  We observed in Remark \ref{notrec} that $\mathbb{Z}_2*X$ is not closed in   $\mathbb{Z}_2\times X.$ Moreover,    since  $\eta(1,x)=x$ for any  $x\in (0,1]$ we have  $\pi_{\mathbb{Z}_2}: X\rightarrow X/\!\sim_{\mathbb{Z}_2}$ is injective and thus a  homeomorphism and  $X/\!\sim_{\mathbb{Z}_2}$ is Hausdorff. 
\end{exe}

\subsection{Invariant metrics}
Let  $\eta:G*X\ni (g,x)\mapsto g\cdot x\in  X$ be  a partial action of  $G$ on the metric space $(X,\rho)$. We say that  $\rho$ is {\it $\eta$-invariant} if for any  $g\in G$ and  $x,y\in X_{g^{-1}}$,  $\rho(g\cdot x,g\cdot y)=\rho(x,y)$.
\begin{exe}\label{componer} Let $\eta$ be as in equation \ref{zx}. Suppose that $X$ is metric, $U$ is a clopen subset of $X$ and   $f$ is an isometry, then $\eta$ is a topological and continuous partial action with invariant metric in any of the following cases.
	\begin{itemize}
		\item $\Z$ is considered as a discrete space.
		\item  $\Z$ is endowed with the $p$-adic topology and $f^p={\rm id}_U,$ for some prime number $p.$
	\end{itemize}
\end{exe}
In the context of hyperspaces endowed with partial actions we give the next.
\begin{exe}  Let $\eta:G*X\ni (g,x)\mapsto g\cdot x\in X$ be a continuous partial action of   $G$ on a compact metric space  $(X,d)$. Denote by $2^X$
the hyperspace of nonempty compact subsets of 
	 $X$ endowed with the Hausdorff metric $d_H,$  which is defined by the rule
$$d_H(A,B)=\inf\{\varepsilon>0: A\subseteq N(B,\varepsilon)\ and\ B\subseteq N(A,\varepsilon)\},$$ where $A,B\in 2^X$ and  $N(A,\varepsilon)=\bigcup\limits_{a\in A} B_d(a;\varepsilon)$. If $\lbrace\eta_g\rbrace_{g\in G}$ is the induced family of bijections by $\eta$,  then follows by \cite[Theorem 3.2]{MPR} that $2^\eta:G*2^X\ni (g,A)\mapsto \eta_g( A)\in 2^X$,  is a continuous partial action of  $G$ in $2^X$, being $$G*2^X=\{(g,A)\in G\times 2^X: (g,a)\in G*X\ (\forall a\in A)\}.$$
	Suppose that  $d$ is $\eta$-invariant, we observe that $d_H$ is $2^\eta$-invariant. For this take $g\in G$ and $A, B\in 2^X$ for which  $(g,A), (g,B)\in G*2^X$.  Let  $\epsilon>0$ with $A\subseteq N(B,\varepsilon)$ and  $B\subseteq N(A,\varepsilon)$. Now given $a\in A$ there exists  $b\in B$ such that $a\in B_d(b,\varepsilon)$,  then  $d(g\cdot a,g\cdot b)=d(a,b)<\varepsilon$ and we have proven than  $\eta_g( A)\subseteq N(\eta_g(B),\varepsilon),$ in a similar way one shows that  $\eta_g(B)\subseteq N(\eta_g( A),\varepsilon)$. Therefore, $d_H(\eta_g( A), \eta_g( B))\leq\varepsilon$,  and $d_H(\eta_g( A),\eta_g( B))\leq d_H(A,B)$. 
	
	On the other hand,  take $\varepsilon>0$ with  $\eta_g( A)\subseteq N(\eta_g( B),\varepsilon)$ and  $\eta_g( B)\subseteq N(\eta_g( A),\varepsilon)$. For  $a\in A$ choose  $b\in {B}$ such that $g\cdot a\in B_d(g\cdot b,\varepsilon)$ then  $d(a,b)=d(g\cdot a,g\cdot b)<\varepsilon$ and  $A\subseteq N(B,\varepsilon),$ again one verifies  $B\subseteq N(A,\varepsilon)$ which implies $d_H(A,B)\leq d_H(\eta_g( A),\eta_g( B))$ hence $d_H(A,B)=d_H(\eta_g( A),\eta_g( B)),$ as desired.
\end{exe}

It follows from   \cite[Proposition 5]{A0} that     there is a compatible  $\eta$-invariant metric for  $X$ provided that  $\eta$ is global and $G$ is countably compact.  Our next goal is to obtain a generalization of this result to the frame of partial actions. 

\begin{teo}\label{metrinv}
	Let $\eta:G*X\rightarrow X$ be a partial action,  then   $X$ and $X_G$ are  metrizable by a invariant metric under any of the following conditions:
\begin{itemize}
\item [(i)] $G$ is countably compact and $X_G$ is metrizable.
\item[(ii)] $G$ is compact and metric, $X$ is metric and $G*X$ is closed.
\end{itemize}
Moreover if $(i)$ holds and $X_G/\sim G$ is $T_1$, then  $X\!/\!\sim_G$ is metrizable.
\end{teo}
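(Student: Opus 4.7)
The plan is to reduce (ii) to (i) via Theorem \ref{propiedades}, to derive (i) from \cite[Proposition 5]{A0} applied to the global enveloping action, and to obtain the final assertion by combining the classical fact recalled in the Introduction with Lemma \ref{lemaaux}.

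For (i), I would note that the enveloping action $\mu$ is a continuous global action of the countably compact group $G$ on the metrizable space $X_G$, so \cite[Proposition 5]{A0} supplies a compatible $\mu$-invariant metric $\rho$ on $X_G$. Since $\iota\colon X\to X_G$ is a continuous open injection, it is an embedding; I would then pull $\rho$ back by setting $d(x,y):=\rho(\iota(x),\iota(y))$ to obtain a compatible metric on $X$. The intertwining $\iota\circ\eta_g=\mu_g\circ\iota$ on $X_{g\m}$ gives, for $x,y\in X_{g\m}$,
\[
d(\eta_g(x),\eta_g(y))=\rho(\mu_g\iota(x),\mu_g\iota(y))=\rho(\iota(x),\iota(y))=d(x,y),
\]
so $d$ is $\eta$-invariant. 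For (ii), I would observe that a compact group is countably compact and that $G\times X$ is metric since both $G$ and $X$ are; Theorem \ref{propiedades}(ii) then yields that $X_G$ is metrizable, and (i) applies.

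For the final assertion, assume (i) holds and $X_G/\!\sim_G$ is $T_1$. By (i) the global action $\mu$ on $X_G$ admits a compatible invariant metric, so the classical fact recalled in the Introduction (a $T_1$ orbit space for a global isometric action is metrizable) gives that $X_G/\!\sim_G$ is metrizable. Since every topological partial action has $G*X$ open, Lemma \ref{lemaaux} applied with $H=G$ furnishes an embedding $\varphi\colon X/\!\sim_G\hookrightarrow X_G/\!\sim_G$, so $X/\!\sim_G$ inherits a metric as a subspace of a metric space. The main obstacle I anticipate is locating or reproving the classical orbit-space result used in the last step, since it appears only in passing in the Introduction; apart from this, the argument is a straightforward transport across the embedding $\iota$.
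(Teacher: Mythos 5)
Your proposal is correct and follows essentially the same route as the paper's proof: reduce (ii) to (i) via Theorem \ref{propiedades}, apply \cite[Proposition 5]{A0} to the enveloping action $\mu$ on $X_G$ and transport the invariant metric to $X$ through the embedding $\iota$, then obtain the final assertion from the classical orbit-space metrization result together with Lemma \ref{lemaaux}. The ``classical fact'' you were worried about locating is exactly what the paper invokes at that step (it cites it as Theorem 2.16 of a reference labelled SDN), so your argument matches the paper's, only with the pullback-invariance computation written out explicitly.
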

\begin{proof} In both cases it is enough to prove that $X_G$ has a compatible $\mu$-invariant  metric  $\rho.$ Indeed,  since $\eta$ is continuous we have by \cite[Proposition 3.12]{KL}  that the spaces $X$ and $\iota(X)$  are homeomorphic, where $\iota$ is given by (\ref{iota}),  thus one obtains an invariant metric for $X$  by restricting $\rho$  to $\iota(X)$. (i) Since the action $\mu$ of  $G$ on $X_G$ given by \eqref{action} is continuous  the result follows from  \cite[Proposition 5]{A0}. (ii)  In this case the space $G\times X$ is metrizable, thus $X_G$ is metrizable  thanks to Theorem \ref{propiedades} and again the result follows from \cite[Proposition 5]{A0}. To show the last assertion, we observe that $X_G$ admits and invariant metric, then the result follows from \cite[Theorem 2.16]{SDN} and Lemma \ref{lemaaux}.
\end{proof}

\begin{rem}\label{nomet}It is known that when $G$ acts globally on a space $X$ admiting an invariant metric, then the space $X/\!\sim_G$ is metric provided that it is $T_1,$ however this not hold for partial actions. For a concrete example take the partial action given in Remark \ref{notrec} and use Theorem \ref{globor} and Remark \ref{cernes}.
\end{rem} The following result tells us that one needs to impose the regularity condition on   $X\!/\sim_G$.
\begin{pro}\label{orbmet} Let $X$ be a separable second countable space endowed with a partial action of $G,$ then the following assertions are equivalent.
\begin{enumerate}
\item[(i)] $X/\!\sim_G$ is metrizable;
\item [(ii)] $X/\!\sim_G$ is regular and $T_1.$
\end{enumerate}
\end{pro}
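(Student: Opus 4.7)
The plan is to use the Urysohn metrization theorem: a topological space is metrizable if and only if it is regular, $T_1$, and second countable. Since the equivalence to be proved concerns exactly regularity, $T_1$, and metrizability, everything hinges on transferring second countability from $X$ to $X/\!\sim_G$.

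The direction (i) $\Rightarrow$ (ii) is immediate: every metrizable space is Hausdorff (hence $T_1$) and regular (in fact normal), so nothing further is needed.

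For (ii) $\Rightarrow$ (i), I would first observe that the induced quotient map $\pi_G: X \to X/\!\sim_G$ defined in \eqref{qmap} is continuous and open, as recalled in the preliminaries. Consequently, if $\{U_n\}_{n\in\N}$ is a countable base for the topology of $X$, then $\{\pi_G(U_n)\}_{n\in\N}$ is a countable base for $X/\!\sim_G$: openness of $\pi_G$ ensures each $\pi_G(U_n)$ is open, and for any open $V\subseteq X/\!\sim_G$ and any $\pi_G(x)\in V$, the preimage $\pi_G^{-1}(V)$ is an open neighborhood of $x$, so it contains some basic $U_n\ni x$, whence $\pi_G(x)\in \pi_G(U_n)\subseteq V$. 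Hence $X/\!\sim_G$ is second countable.

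Having assumed $X/\!\sim_G$ is regular and $T_1$, the Urysohn metrization theorem applies and yields metrizability of $X/\!\sim_G$, completing the equivalence. I do not foresee a real obstacle here; the only subtle point is ensuring that openness of $\pi_G$ is available without any extra hypothesis, and this is precisely what was recorded following \eqref{qmap}.
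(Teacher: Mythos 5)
Your proposal is correct and follows essentially the same route as the paper: both directions rest on the openness of the quotient map $\pi_G$ (recorded after \eqref{qmap}) to transfer second countability from $X$ to $X/\!\sim_G$, followed by an application of Urysohn's metrization theorem. The only difference is that you spell out the standard verification that images of basic open sets under an open continuous surjection form a base, which the paper leaves implicit.
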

\begin{proof} Clearly (i) implies (ii). To see  that (ii) implies (i), notice that $X/\!\sim_G$ is separable and second countable, because the quotient map $\pi_G$ is open. Therefore, by Urysohn’s
	metrization Theorem, the space $X\!/\sim_G$ is metrizable.
\end{proof}

%

\section{Partial actions on orbit spaces}\label{parinq} Let $\eta$ be a partial action of $G$ on $X$  and $H$ be a normal subgroup of $G.$ 
The idea now is  to construct a partial action of $G/H$  on $X/\!\sim_H.$  If $\eta$ is a global action, then $G/H$ acts globally on $X/\!\sim_H$ via  
\begin{equation}\label{globquo}\eta_{gH}(H\cdot x)= H\cdot (g\cdot x),\end{equation} for any $g\in G$ and $x\in X.$

 For the case of partial action, we notice that  mimicking the construction above does not yield to a partial action of $G/H$  on $X/\!\sim_H$ because  it is not natural how to define the set $G/H*(X/\!\sim_H).$ Indeed the construction of such a partial action    is essentially more laborious than the global one,  as we shall see in the next.




\begin{teo}\label{lem4.3}
	Let $\eta$ be a   partial action of  $G$ on  $X$ and $H$ be a normal subgroup of $G$. Then there is a continuous partial action $\eta_{G/H}$ of $G/H$ on $X/\!\sim_H$, such that the the orbit spaces  $(X/\!\sim_H)/\!\sim_{G/H}$ and $X/\!\sim_G$ are homeomorphic.
\end{teo}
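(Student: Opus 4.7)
My plan is to pass to the globalization, where the action is global and the construction is transparent. Since $H$ is normal in $G,$ the rule $\bar\mu_{gH}(H[f,x]) := H[gf,x]$ defines a global action of $G/H$ on $X_G/\!\sim_H$: well-definedness on the $H$-orbit of $[f,x]$ uses that $\mu_g$ permutes $H$-orbits (because $gHg^{-1}=H$), and well-definedness on cosets $gH$ uses that $H$ acts trivially on $X_G/\!\sim_H$. Continuity is routine, since the product of two open quotient maps $G\times X_G \to G/H\times X_G/\!\sim_H$ is itself a quotient map, through which $\bar\mu$ factors.

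With $\bar\mu$ in hand I would invoke Lemma \ref{lemaaux} (its hypothesis that $G*X$ be open is automatic for a continuous partial action, by the proposition after Definition \ref{globa}). Thus $\varphi$ identifies $X/\!\sim_H$ with an open subspace of $X_G/\!\sim_H,$ and Example \ref{indu} then produces an induced topological partial action of $G/H$ on $\varphi(X/\!\sim_H);$ transporting it back through $\varphi$ yields $\eta_{G/H}.$ Unwinding the transport, the domain of $\eta_{G/H}$ at $gH$ is the set of $H$-orbits $H^x\cdot x$ meeting $X_{g^{-1}},$ and $\eta_{G/H}(gH,H^x\cdot x)=H^{\eta_g(x)}\cdot \eta_g(x)$ whenever $x\in X_{g^{-1}};$ the partial-action axioms are inherited automatically from $\bar\mu.$

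For the second claim, I would build the candidate homeomorphism by factoring $\pi_{G/H}\circ \pi_H\colon X \to (X/\!\sim_H)/\!\sim_{G/H}$ through $\pi_G.$ By construction of $\eta_{G/H},$ this composition is constant on $G$-orbits, so it descends to a continuous map $\bar\psi\colon X/\!\sim_G \to (X/\!\sim_H)/\!\sim_{G/H};$ surjectivity is immediate and openness follows because $\pi_G$ and $\pi_{G/H}\circ \pi_H$ are both open. The nontrivial point is injectivity: given $H^x\cdot x \sim_{G/H} H^y\cdot y$ in $X/\!\sim_H,$ one has $H[g,x]=H[1,y]$ in $X_G/\!\sim_H$ for some $g\in G,$ so there is $h\in H$ with $\mu_h([g,x])=[1,y],$ i.e.\ $[hg,x]=[1,y];$ the defining relation \eqref{eqgl} then forces $x\in X_{(hg)^{-1}}$ and $\eta_{hg}(x)=y,$ so $x\sim_G y.$

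The main obstacle is precisely this last step: the injectivity of $\bar\psi$ requires linking $\sim_H$-equivalence inside the globalization (realized by the global $\mu$) back to the original partial action on $X$ through \eqref{eqgl}. Everything else, including continuity of $\bar\mu,$ the identification of the domains of $\eta_{G/H},$ and the construction of $\bar\psi,$ is formal once the reduction to $X_G$ is in place.
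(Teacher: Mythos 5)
Your construction is the same as the paper's: globalize, let $G/H$ act on $X_G/\!\sim_H$ by $H[f,x]\mapsto H[gf,x]$, restrict to the open set ${\rm Im}(\varphi)$ via Lemma \ref{lemaaux} and Example \ref{indu}, and transport back through $\varphi$. Your homeomorphism argument is also the paper's, run in the opposite direction: the paper defines $\psi\colon (X/\!\sim_H)/\!\sim_{G/H}\to X/\!\sim_G$, and its well-definedness and injectivity are exactly your injectivity and well-definedness of $\bar\psi$, with the same key step of converting $H[g,x]=H[1,y]$ into $[hg,x]=[1,y]$ and then into $\eta_{hg}(x)=y$ via \eqref{eqgl}. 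Your continuity argument for $\bar\mu$, factoring through the product $G\times X_G\to (G/H)\times (X_G/\!\sim_H)$ of the two open quotient maps, is a correct elaboration of a step the paper merely asserts.

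However, your ``unwinding'' of the transported action is false, and this is precisely the subtlety the paper warns about just before the theorem. The domain of $\eta_{gH}$ is \emph{not} the set of $H$-orbits meeting $X_{g\m}$: by Theorem \ref{pro4.4}, $\pi_H(x)$ lies in the domain of $\eta_{gH}$ if and only if there exists $h\in H$ with $(hg,x)\in G*X$, in which case $\eta_{gH}(\pi_H(x))=\pi_H((hg)\cdot x)$; this condition is strictly weaker than $H^x\cdot x\cap X_{g\m}\neq\emptyset$, since in a partial action $\exists\, (hg)\cdot x$ does not force the existence of $h'\in H^x$ with $h'\cdot x\in X_{g\m}$. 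Concretely, let $\Z$ (discrete) act on itself by translation and restrict as in Example \ref{indu} to $X=\{0,3\}$, so that $X_{3}=\{3\}$, $X_{-3}=\{0\}$ and $X_n=\emptyset$ for $n\neq 0,\pm 3$; take $H=2\Z$ and $g=1$. Then $X_{-1}=\emptyset$, so no $H$-orbit meets $X_{-1}$, yet $(2+1,0)\in\Z*X$, so $\pi_H(0)$ does lie in the domain of $\eta_{1+H}$ and is sent to $\pi_H(3)$. Note that if your description were taken as the definition, the second claim of the theorem would fail in this example: the $H$-orbits are singletons, your domains are empty, so $(X/\!\sim_H)/\!\sim_{G/H}$ would have two points while $X/\!\sim_G$ has one. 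Your proof survives only because the false sentence is never actually used: well-definedness of $\bar\psi$ needs just the correct implication (if $x\in X_{g\m}$ then $\pi_H(x)$ is in the domain of $\eta_{gH}$ and $\eta_{gH}(\pi_H(x))=\pi_H(\eta_g(x))$), and your injectivity step works directly from $\eta_{gH}=\varphi\m\circ\mu_{gH}\circ\varphi$ rather than from the unwound formula. Delete or correct that sentence; the rest stands.
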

		
\begin{proof}
Let $\mu$ be  the globalization of $\eta$. Then $\mu$  is continuous and by \eqref{globquo}  it induces a continuous  action $\mu_{G/H}$ on $X_G/\!\sim_H$ as follows: 
	\begin{equation*}\label{mugh}\mu_{gH}: X_G/\!\sim_H\ni H[t,x] \mapsto H[gt,x]\in X_G/\!\sim_H,\end{equation*}
for each $gH\in G/H$. 
Now let $\varphi$  be defined by \eqref{var}. By   Example \ref{indu} and Lemma \ref{lemaaux} the map   $\mu_{G/H}$ induces a continuous partial action   $\eta'_{G/H}$ of $G/H$ on  the open  set ${\rm Im}(\varphi)$ of  $X_G/\!\sim_H,$ where  $\eta'_{G/H}=\{\eta'_{gH}:X_{g\m H} \to X_{gH}\}_{gH\in G/H}$ and 
\begin{equation}\label{xgh}X_{gH}=\mu_{gH}({\rm Im}(\varphi))\cap {\rm Im}(\varphi)\,\,\,\text{ and  }\,\,\, \eta'_{gH}=\mu_{gH}\restriction{X_{g^{-1}H}}.\end{equation}   Let $\Omega:=X/\sim_H$, then one obtains a partial action   $\eta_{G/H}$ of $G/H$ on $\Omega$ by setting  
\begin{equation}\label{zgh}\Omega_{gH}=\varphi^{-1}(X_{gH}), g\in G\,\,\,\text{ and  }\,\,\,\eta_{gH}: \Omega_{g^{-1}H}\ni x\mapsto\varphi^{-1}(\eta'_{gH}(\varphi(x)))\in  \Omega_{gH}.\end{equation}  Then 

\begin{equation}\label{etaq}\eta_{gH}(x)=(\varphi^{-1}\circ \mu_{gH}\circ \varphi)(x),\end{equation} for each $x\in \Omega_{g\m H}$. The fact that   $\eta_{G/H}$ is continuous is straightforward.

Let $\sim_{G/H}$ be the orbit  equivalence relation in $\Omega$ induced by $\eta_{G/H}$. To finish the proof we show that  the spaces $\Omega/\!\sim_{G/H}$ and $X/\sim_G$ are homeomorphic.  Consider the diagram:
\begin{center}
	$\xymatrix{X \ar[d]_-{\pi_H} \ar[r]^-{\pi_G}& X\!/\sim_G \\
		\Omega \ar[r]_-{\pi_{G/H}}& \Omega/\!\sim_{G/H}\ar[u]^-{\psi}
	}$
\end{center}
where $\psi$ is made such that it commutes, that is  
\begin{equation}\label{homeo} \psi(\pi_{G/H}(\pi_H(x)))=\pi_G(x),\end{equation} for each $x\in X$. Let us prove that $\psi$ is well defined. Take  $z\in \Omega/\!\sim_{G/H}$ and $x,y\in X$ such that $\pi_{G/H}(\pi_H(x))=\pi_{G/H}(\pi_H(y))$. Then there is $g\in G$ with 
$$\pi_H(y)=\eta_{gH}(\pi_H(x)) \stackrel{\eqref{etaq} }=\varphi^{-1}(\mu_{gH}(\varphi(\pi_H(x))))=\varphi^{-1}(H[g,x]),$$  which implies  $H[g,x]=H[1,y]$ and there is $h\in H$ such that $[hg,x]=[1,y],$ thus  $\eta_{hg}(x)=y$ and $\pi_G(x)=\pi_G(y),$ which shows that $\psi$ is well defined. Moreover by its construction,  the map  $\psi$ is continuous and surjective.

Let us prove that $\psi$ is injective. Let $z_1,z_2\in \Omega/\!\sim_{G/H}$ be such that $\psi(z_1)=\psi(z_2)$, and let $x,y\in X$ be such that $\pi_{G/H}(\pi_H(x))=z_1$ and $\pi_{G/H}(\pi_H(y))=z_2$. Since $\pi_G(x)=\pi_G(y)$, there is $g\in G^x$ satisfying $\eta_g(x)=y$. To prove that $z_1=z_2$ we need  to find $t\in G$ for which $\eta_{tH}(\pi_H(x))=\pi_H(y)$. We claim that $\eta_{gH}(\pi_H(x))=\pi_H(y).$ In fact, by \eqref{etaq} we get \begin{center}
	$\eta_{gH}(\pi_H(x))=\varphi^{-1}(\mu_{gH}(\varphi(\pi_H(x))))=\varphi^{-1}(H[g,x]),$
\end{center}
and 
	$\varphi(\pi_H(y))=H[1,y]=H[g,x],$
then $\eta_{gH}(\pi_H(x))=\pi_H(y)$ and  $\psi$ is injective.  Finally  let $U\subseteq \Omega/\!\sim_{G/H}$  be an open set. Since $\pi_G$ is open, $\pi_G(\pi_H^{-1}(\pi_{G/H}^{-1}(U)))\subseteq X/\!\sim_G$ is open. Thus $\psi(U)$ is open and $\psi: \Omega/\!\sim_{G/H}\to X_G/\!\sim_G $ is  a homeomorphism. \end{proof}



The following result describes explicitly the partial action $\eta_{G/H}$ and its globalization.

\begin{teo}\label{pro4.4}
Let $\eta$ be a   partial action of  $G$ on  $X$, $H$ be a normal subgroup of $G$ and  $\eta_{G/H}$ be the partial action of $G/H$ on $X/\!\sim_H$ defined by  \eqref{etaq} . Then the following assertions hold.
\begin{enumerate}
\item  [(i)]For $g\in G$ we have $(X/\!\sim_H)_{gH}=\{\pi_H(x)\mid  \exists h\in H \text{ such\ that}\ (hg\m,x)\in G*X \}$.
\item[(ii)] The domain of $\eta_{G/H}$ is 
$$G/H* X/\!\sim_H=\{(gH,\pi_H(x)): (g,x)\in G\times X\,\,\wedge\,\,\exists h\in H\text{ such\ that}\ (hg,x)\in G*X\}.$$
\item[(iii)] We have 
\begin{equation}\label{etaghh}\eta_{G/H}: G/H* X/\!\sim_H\ni (gH,\pi_H(x))\mapsto \pi_H((hg)\cdot x)\in X/\!\sim_H,\end{equation} where  $h\in H$ is  such\ that $(hg,x)\in G*X.$
\item [(iv)] The globalization of $\eta_{G/H}$ is  $(G/H)$-equivalent  to $X_G/\!\sim_H,$ where $G/H$ acts on $X_G/\!\sim_H,$ via $\mu_{G/H}.$
\end{enumerate}
\end{teo}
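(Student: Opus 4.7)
The strategy is to unpack the constructions \eqref{xgh}, \eqref{zgh}, \eqref{etaq} for parts (i)--(iii), and to deduce (iv) by a two-step application of Proposition \ref{kellen} together with Lemma \ref{lemaaux}.

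For (i), the class $\varphi(\pi_H(x)) = H[1,x]$ always lies in ${\rm Im}(\varphi)$, so the defining condition $\pi_H(x) \in Z_{gH} = \varphi^{-1}(\mu_{gH}({\rm Im}(\varphi)) \cap {\rm Im}(\varphi))$ reduces to the existence of some $y \in X$ with $H[g,y] = \mu_{gH}(H[1,y]) = H[1,x]$, i.e.\ $[hg,y] = [1,x]$ for some $h \in H$. By \eqref{eqgl} this is equivalent to $y \in X_{(hg)\m}$ with $\eta_{hg}(y) = x$, hence to $x \in X_{hg}$. The normality of $H$ converts this to the existence of $h' \in H$ with $x \in X_{gh'\m} = X_{(h'g\m)\m}$, that is, $(h'g\m,x)\in G*X$, which is (i). Part (ii) is (i) applied with $g\m$ in place of $g$, since the domain of $\eta_{gH}$ is $(X/\!\sim_H)_{g\m H}$, and then one assembles pairs over $gH \in G/H$. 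For (iii), \eqref{etaq} gives $\eta_{gH}(\pi_H(x)) = \varphi\m(H[g,x])$; the element $h \in H$ from (ii) satisfies $x \in X_{(hg)\m}$, so by \eqref{eqgl} $[hg,x] = [1,(hg)\cdot x]$, whence $H[g,x] = H[1,(hg)\cdot x] = \varphi(\pi_H((hg)\cdot x))$, yielding the formula in (iii).

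For (iv), Lemma \ref{lemaaux} gives that $\varphi$ is an open embedding, so ${\rm Im}(\varphi)$ is open in $X_G/\!\sim_H$. By \eqref{xgh}, $\eta'_{G/H}$ is precisely the induced partial action (in the sense of Example \ref{indu}) of the global action $\mu_{G/H}$ on the open set ${\rm Im}(\varphi)$. Every class $H[t,x] \in X_G/\!\sim_H$ equals $\mu_{tH}(H[1,x]) = \mu_{tH}(\varphi(\pi_H(x)))$, hence $(G/H)\cdot {\rm Im}(\varphi) = X_G/\!\sim_H$, and Proposition \ref{kellen}(ii) produces a $(G/H)$-equivalence $({\rm Im}(\varphi))_{G/H}\cong X_G/\!\sim_H$. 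Now by \eqref{zgh} the map $\varphi$ intertwines $\eta_{G/H}$ with $\eta'_{G/H}$ and is a homeomorphism onto its image, so it is a $(G/H)$-equivalence between $(X/\!\sim_H,\eta_{G/H})$ and $({\rm Im}(\varphi),\eta'_{G/H})$; Proposition \ref{kellen}(i) then gives $(X/\!\sim_H)_{G/H}\cong({\rm Im}(\varphi))_{G/H}$, and composition with the previous equivalence yields (iv).

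The main obstacle is bookkeeping: one must keep careful track of which auxiliary element $h \in H$ is being used at each step and verify that the resulting expressions are independent of that choice. This is automatic once Theorem \ref{lem4.3} is in hand, since $\eta_{G/H}$ is already known to be well defined, but it merits an explicit remark when writing down the formula in (iii). The normality of $H$ is crucial for translating between conditions of the form $x \in X_{hg}$ and $x \in X_{gh\m}$, and it is precisely what makes $\mu_{G/H}$ a well-defined global action on $X_G/\!\sim_H$ in the first place.
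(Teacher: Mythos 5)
Your proposal is correct and follows essentially the same route as the paper's proof: parts (i)--(iii) are obtained by unpacking \eqref{xgh}, \eqref{zgh}, \eqref{etaq} together with \eqref{eqgl} and the normality of $H$, and part (iv) combines the saturation $\mu_{G/H}[{\rm Im}(\varphi)]=X_G/\!\sim_H$ with Proposition \ref{kellen}(ii), the fact that $\varphi$ is a $(G/H)$-equivalence onto its image (Lemma \ref{lemaaux}), and Proposition \ref{kellen}(i). No substantive differences to report.
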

\begin{proof}  (i). Take $g\in G$ and $x\in X$ such that $\pi_H(x)\in (X/\!\sim_H)_{gH}$ then by \eqref{zgh} $\varphi(\pi_H(x))=H[1,x]\in X_{gH}$  and   \eqref{xgh}  gives an element $y\in X$ such that  $\mu_{gH}(H[1,y])=H[1,x],$ that is, $H[g,y]=H[1,x]$ and $[h_0,x]=[g,y]$ for some $h_0\in H$,  therefore $(g^{-1}h_0,x)\in G*X.$ Since $H$ is normal in $G$ we have $g^{-1}h_0=hg^{-1}$ for some $h\in H$ and  $(hg\m, x)\in G*X$. Conversely if $x\in X$ verifies $(h_0g\m, x)\in G*X$ for some $h_0\in H$. Then $h_0g\m=g\m h$ for some $h\in H$ and we have  $[h,x]=[g,y],$ where  $y=(g^{-1}h)\cdot x$  and
$$\varphi(\pi_H(x))=H[1,x]=H[1,(h\m g)\cdot y]\stackrel{\eqref{eqgl}}=H[h\m g, y]=H[ g, y]=\mu_{gH}(H[1,y])\in \mu_{gH}( {\rm Im}\varphi),$$ thus $\pi_H(x)\in (X/\!\sim_H)_{gH}$ thanks to equations \eqref{xgh} and \eqref{zgh}.

(ii).	This is a consequence  of part (i) and the fact that $(gH,\pi_H(x))\in G/H* X/\!\sim_H$ if and only if $\pi_H(x)\in (X/\!\sim_H)_{g\m H}$.

(iii). For $(gH,\pi_H(x))\in G/H* X/\!\sim_H,$ there exists  $h\in H$ such that $(hg,x)\in  G*X$. Then $[hg,x]=[1,(hg)\cdot x ] $  and $\varphi(\pi_H((hg)\cdot x))=H[hg,x]=H[g,x]$. Then follows by \eqref{etaq} that  $$\eta_{G/H}(gH,\pi_H(x))=\varphi^{-1}(H[g,x])=\pi_H((hg)\cdot x),$$ as desired.

(iv).  By Lemma \ref{lemaaux} we know that ${\rm Im}\varphi=\{H[1,x]\mid x\in X\},$ then $\mu_{G/H}[{\rm Im}\varphi]=X_G/\!\sim_H,$ thus by (ii) of Proposition \ref{kellen}  the spaces $({{\rm Im}\varphi})_{G/H}$  and $X_G/\!\sim_H $  are homeomorphic. 
Now we must show that the spaces ${\rm Im}\varphi$ and $X/\!\sim_H$ are $G/H$-equivalent,  but by  (i) in Lemma  \ref{kellen} it is enough to show that $\varphi$ is a $(G/H)$-map, and this follows from \eqref{zgh}. 
\end{proof}

\begin{exe}  Consider the partial action  $\eta:\mathbb{Z}*X\rightarrow X$ of Example \ref{componer} and let $m\in \mathbb{Z}^+$ be such that $f^m={\rm id}_U$, where $m$ is the smallest positive  integer with this property. If $H=m\mathbb{Z}$, then  the induced quotient morphism $\pi_H$ satisfies $\pi_H(x)=\{x\},$ for any $x\in X,$ thus the   spaces $X$ and $X/\!\sim_H$ are homeomorphic.  Now  we  determine $\eta_{\mathbb{Z}/H}.$ Take $(k+H,\pi_H(x))\in \mathbb{Z}/H* X/\!\sim_H,$ if $k\in H$,  by \eqref{etaghh}  we get $$\eta_{\mathbb{Z}/H}(k+H,\pi_H(x))=\eta_{\mathbb{Z}/H}(H,\pi_H(x))=\pi_H(x).$$
Suppose  $k\notin H.$ By (ii) of Theorem \ref{pro4.4}, there is $h\in H$ such that $(h+k,x)\in \mathbb{Z}*X$ and $\eta_{\mathbb{Z}/H}(k+H,\pi_H(x))=\pi_H(\eta(h+k,x)),$ thanks to \eqref{etaghh}. Since $(h+k,x)\in \mathbb{Z}*X$ and $k\notin H$ the equality \eqref{zx} implies  $x\in U$. Then, $(h,x)$  and $(k+h,x)$ belong to  $\mathbb{Z}*X,$ which gives 
$\eta(h+k,x)=\eta(k+h,x)=f^{k+h}(x)=f^k(x)$. We have shown that if $k\notin H$ with $(k+H,\pi_H(x))\in \mathbb{Z}/H* X/\sim_H$,  one gets  $$\eta_{\mathbb{Z}/H}(k+H,\pi_H(x))=\pi_H(\eta(h+k,x))=\pi_H(f^k(x))=\pi_H(\eta(k,x)).$$

\end{exe}

\begin{coro}
	Let $G$ be compact and Hausdorff group, $H$ be a closed normal subgroup of   $G,$  and  $\eta: G*X\rightarrow X$ be a continuous partial action on a compact Hausdorff space $X$. If $G*X$ is closed in $G\times X$, then $G/H* X/\!\sim_H$ is closed in $G/H\times X/\!\sim_H$.
\end{coro}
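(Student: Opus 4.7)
The plan is to exhibit $G/H * X/\!\sim_H$ as the image of the closed set $G*X$ under a closed continuous map, namely the obvious product quotient map from $G\times X$ to $G/H\times X/\!\sim_H$.

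First I would establish that all four spaces in sight are compact Hausdorff. For $G/H$ this is standard (quotient of a compact Hausdorff group by a closed subgroup). For $X/\!\sim_H$ I would invoke Theorem \ref{propiedades}(i) applied to the subgroup $H$: since $H$ is closed in the compact group $G$, it is itself compact, and since $G*X$ is closed in $G\times X$ the intersection $H*X = (H\times X)\cap(G*X)$ is closed in $H\times X$, so Theorem \ref{propiedades}(i) yields that $X/\!\sim_H$ is Hausdorff; compactness is automatic as $\pi_H$ is a continuous surjection from the compact space $X$. Consequently $G/H\times X/\!\sim_H$ is compact Hausdorff, and the product map $p\times\pi_H:G\times X\to G/H\times X/\!\sim_H$ (where $p:G\to G/H$ is the canonical projection) is a continuous surjection from a compact space to a Hausdorff space, and therefore closed.

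The heart of the argument is then the set-theoretic identity
\[
(p\times\pi_H)(G*X) \;=\; G/H * X/\!\sim_H,
\]
which combined with closedness of $p\times\pi_H$ and of $G*X$ immediately gives the conclusion. The inclusion $\supseteq$ is immediate from the description of $G/H*X/\!\sim_H$ given in Theorem \ref{pro4.4}(ii): if $(gH,\pi_H(x))$ lies in $G/H*X/\!\sim_H$ there is $h\in H$ with $(hg,x)\in G*X$, and $(p\times\pi_H)(hg,x)=(gH,\pi_H(x))$ since $hg\in gH$.

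The reverse inclusion is the only place where normality of $H$ really matters, and I expect it to be the main (though not severe) obstacle. Starting from $(gH,\pi_H(x))=(p\times\pi_H)(g',x')$ with $(g',x')\in G*X$, I would write $g'=gh_1$ (from $gH=g'H$) and $x'=h_2\cdot x$ for some $h_2\in H$ with $(h_2,x)\in G*X$ (from $\pi_H(x)=\pi_H(x')$). Axiom (PA2) applied to $g'\cdot(h_2\cdot x)$ then forces $(g'h_2,x)\in G*X$, i.e.\ $(gh_1 h_2,x)\in G*X$. Normality of $H$ lets me rewrite $gh_1h_2=hg$ for some $h\in H$, so $(hg,x)\in G*X$ and Theorem \ref{pro4.4}(ii) places $(gH,\pi_H(x))$ in $G/H*X/\!\sim_H$, as required.
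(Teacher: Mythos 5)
Your proof is correct, but it takes a genuinely different route from the paper's. The paper works inside the enveloping space: it observes that $\eta_{G/H}$ is $G/H$-equivalent, via the embedding $\varphi$ of Lemma \ref{lemaaux}, to the partial action induced by the global action $\mu_{G/H}$ on ${\rm Im}(\varphi)\subseteq X_G/\!\sim_H$, so by Remark \ref{domiclopen} it suffices that ${\rm Im}(\varphi)$ be closed in $X_G/\!\sim_H$; that closedness is then obtained from compactness of $X/\!\sim_H$ together with Hausdorffness of $X_G/\!\sim_H$, which costs both parts of Theorem \ref{propiedades} (part (ii) to make $X_G$ Hausdorff, then part (i) for its $H$-orbit space). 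You bypass the globalization entirely: you identify $G/H* X/\!\sim_H$ as $(p\times\pi_H)(G*X)$, using only the explicit description of the domain in Theorem \ref{pro4.4}(ii), and conclude because $p\times\pi_H$ is a continuous map from a compact space to a Hausdorff space, hence closed; here Hausdorffness of $X/\!\sim_H$ needs only Theorem \ref{propiedades}(i) applied to $H$, with $H*X=(H\times X)\cap(G*X)$ closed, as you note. Your argument is more economical and makes transparent exactly where compactness, closedness of $G*X$, and normality enter, while the paper's proof trades this for reuse of the structural machinery (the equivalence with the induced action on ${\rm Im}(\varphi)$) already developed in Theorem \ref{pro4.4}(iv). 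Two small remarks: your inclusion $(p\times\pi_H)(G*X)\subseteq G/H* X/\!\sim_H$ is immediate by taking $g=g'$, $x=x'$ and $h=1$ in the description of Theorem \ref{pro4.4}(ii), so the detour through representatives and (PA2) is unnecessary; and, contrary to your remark, normality is also used in the inclusion $\supseteq$, precisely when you assert $hg\in gH$.
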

\begin{proof} Let $\eta'$ be the partial action defined  \eqref{xgh}. 
	By construction we get that $\eta_{G/H}$ and $\eta'_{G/H}$ are $G/H$-equivalent,  and  thus  
it is enough to show that
	$G/H*{\rm Im}(\varphi)$ is closed in $G/H\times {\rm Im}(\varphi)$. Having at hand Remark \ref{domiclopen} we only need to see that  ${\rm Im}(\varphi)$ is closed in $X_G/\!\sim_H.$ Now by  (ii) in Theorem \ref{propiedades} the enveloping space $X_G$ is Hausdorff and  since   $H$ is compact then  the first item of Theorem \ref{propiedades} implies that  $X_G/\!\sim_H$ is Hausdorff.  Also $X/\!\sim_H $ is compact which implies that  $\varphi$ is a closed map, then ${\rm Im}(\varphi)$ is closed in  $X_G/\!\sim_H$ and  $G/H*{\rm Im}(\varphi)$ is closed  in $G/H\times {\rm Im}(\varphi)$ which finishes the proof.
\end{proof}

The following is clear.
\begin{lem}\label{induci}
	Let  $G$ and  $H$ be topological groups and $\phi: G\rightarrow H$ be a group homomorphism. If $\{\eta_h\colon X_{h\m}\to X_h \}_{h\in H} $ is a   partial action of $H$ on  $X,$ then  the family  $\{\eta_{\phi(g)}\colon U_{g\m}\to U_{g}\}_{g\in G} ,$ where $U_g=X_{\phi(g)}, g\in G,$ is a   partial action of $G$ on  $X$ such that   
	\begin{equation}\label{otrain}G*X= (\phi\times {\rm id}_X)^{-1}(H*X)\,\,\,{\rm and}\,\,\,  G*X\ni (g,x)\mapsto \eta(\phi(g),x)\in X.\end{equation} 
\end{lem}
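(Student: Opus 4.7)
The plan is to use Proposition~\ref{fam} to verify the axioms, and then address the topological content. Let me set $Y_g := X_{\phi(g)}$ and $\rho_g := \eta_{\phi(g)}$ for each $g \in G$, so that the candidate family is $\{\rho_g \colon Y_{g\m} \to Y_g\}_{g \in G}$.

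First I would check the three items of Proposition~\ref{fam}. Since $\phi$ is a homomorphism, $\phi(1_G) = 1_H$, so $Y_1 = X_{\phi(1)} = X_1 = X$ and $\rho_1 = \eta_{\phi(1)} = \eta_1 = \mathrm{id}_X$, which gives (i). For (ii), using that $\eta$ satisfies~(ii) for $H$ and the equality $\phi(g)\phi(h) = \phi(gh)$,
\[
\rho_g(Y_{g\m}\cap Y_h) = \eta_{\phi(g)}(X_{\phi(g)\m}\cap X_{\phi(h)}) = X_{\phi(g)}\cap X_{\phi(g)\phi(h)} = Y_g\cap Y_{gh}.
\]
For (iii), the same kind of direct substitution gives
\[
\rho_g\rho_h = \eta_{\phi(g)}\eta_{\phi(h)} = \eta_{\phi(g)\phi(h)} = \eta_{\phi(gh)} = \rho_{gh}
\]
on the appropriate intersection, so the pulled-back family is indeed a set-theoretic partial action of $G$ on $X$.

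Next I would deal with the topological side. Each $Y_g = X_{\phi(g)}$ is open in $X$ and each $\rho_g = \eta_{\phi(g)}$ is a homeomorphism, so the partial action is topological. For the domain, observe by definition that
\[
G*X = \{(g,x)\in G\times X \mid x \in Y_{g\m}\} = \{(g,x)\in G\times X \mid (\phi(g),x)\in H*X\} = (\phi\times \mathrm{id}_X)\m(H*X),
\]
which is the first claimed equality in~\eqref{otrain}. Since $H*X$ is open in $H\times X$ and $\phi\times \mathrm{id}_X$ is continuous, $G*X$ is open in $G\times X$. Finally, the action map is the composition
\[
G*X \xrightarrow{\ \phi\times \mathrm{id}_X\ } H*X \xrightarrow{\ \eta\ } X,
\]
each factor being continuous, which simultaneously verifies continuity of the new partial action and yields the explicit formula $(g,x)\mapsto \eta(\phi(g),x)$ in~\eqref{otrain}.

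There is no real obstacle here: every axiom reduces to the corresponding one for $\eta$ together with the homomorphism property of $\phi$, and the topological assertions are immediate from continuity of $\phi$ together with the fact that $H*X$ is open and $\eta$ is continuous on $H*X$.
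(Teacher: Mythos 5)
Your proposal is correct, and it is precisely the routine verification the paper has in mind: the paper states Lemma~\ref{induci} without proof (``The following is clear''), and your argument---checking the three conditions of Proposition~\ref{fam} via the homomorphism property of $\phi$, then obtaining the domain identity and continuity from the factorization $\eta\circ(\phi\times\mathrm{id}_X)$---is exactly the omitted reasoning. No gaps; the only implicit ingredient is the openness of $H*X$, which the paper's preliminary proposition already grants.
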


\begin{rem}\label{dominio}

 Using  $\eta_{G/H}$ and the  canonical homomorphism $p_H: G\rightarrow G/H,$ it follows by Theorem \ref{lem4.3} and  Lemma \ref{induci}  that  there is a partial action $\eta^{p_H}$ of $G$ on  $X/\!\sim_H$ which by \eqref{otrain} has domain
  \begin{equation}\label{dometa}G* (X/\!\sim_H)=\{(g,\pi_H(x))\mid g\in G,\ x\in X,\ (gH,\pi_H(x))\in G/H* X/\!\sim_H\},\end{equation} and 
  \begin{equation}\label{etaqu}\eta^{p_H}(g, \pi_H(x))= \eta_{G/H} (gH, \pi_H(x)).
\end{equation} From now on we always consider $G$ acting partially on $X/\!\sim_H$  via  $\eta^{p_H}.$

\end{rem}


Let $H_1, H_2$ be subgroups of $G$ such that $H_1\subseteq H_2$. We define $\pi_{H_1, H_2}: X/\!\sim_{H_1}\to X/\!\sim_{H_2}$ as the only map  such that  
\begin{equation}\label{pih12}\pi_{H_2}= \pi_{H_1, H_2}\circ \pi_{H_1},\end{equation} in particular  for a subgroup $H$ of $G$ the map $\pi_{H,H}$ is the identity on  $X/\!\sim_{H}.$
\begin{pro}\label{equivari}
	Let $H, H_1$ and $H_2$ be normal subgroups of $G$ with $H_1\subseteq H_2$. Then  $\pi_H$ and $\pi_{H_1,H_2}$ are $G$-maps.
\end{pro}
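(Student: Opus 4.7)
The plan is to unpack the explicit descriptions of the partial action $\eta^{p_H}$ of $G$ on $X/\!\sim_H$ given in Remark \ref{dominio}, together with the formula \eqref{etaghh} for $\eta_{G/H}$, and verify the two conditions in the definition of a $G$-map directly. The key observation driving both verifications is that we are free to pick the auxiliary element $h\in H$ witnessing membership in the domain, and in each case an obvious choice is available.

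First, I would show that $\pi_H$ is a $G$-map. By part (ii) of Theorem \ref{pro4.4}, restated in Remark \ref{dominio}, one has $(g,\pi_H(x))\in G*(X/\!\sim_H)$ if and only if there exists $h\in H$ with $(hg,x)\in G*X$. Whenever $(g,x)\in G*X$ itself, the choice $h=1\in H$ witnesses this condition, so the domain is preserved. Then \eqref{etaghh} with the same choice $h=1$ yields
\[
\eta^{p_H}(g,\pi_H(x)) \;=\; \pi_H((1\cdot g)\cdot x) \;=\; \pi_H(g\cdot x) \;=\; \pi_H(\eta(g,x)),
\]
which is precisely the equivariance required of $\pi_H$.

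The argument for $\pi_{H_1,H_2}$ proceeds along the same lines. Suppose $(g,\pi_{H_1}(x))\in G*(X/\!\sim_{H_1})$, witnessed by some $h_1\in H_1$ with $(h_1 g,x)\in G*X$. Since $H_1\subseteq H_2$, the element $h_1$ also lies in $H_2$, so it witnesses $(g,\pi_{H_2}(x))\in G*(X/\!\sim_{H_2})$; in other words, $(g,\pi_{H_1,H_2}(\pi_{H_1}(x)))$ belongs to the domain of $\eta^{p_{H_2}}$. Using \eqref{etaghh} in both $X/\!\sim_{H_1}$ and $X/\!\sim_{H_2}$ with the common witness $h_1$, combined with \eqref{pih12}, one obtains
\[
\pi_{H_1,H_2}(\eta^{p_{H_1}}(g,\pi_{H_1}(x))) \;=\; \pi_{H_1,H_2}(\pi_{H_1}((h_1 g)\cdot x)) \;=\; \pi_{H_2}((h_1 g)\cdot x) \;=\; \eta^{p_{H_2}}(g,\pi_{H_2}(x)),
\]
which completes the verification.

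There is no serious obstacle here: the well-definedness of \eqref{etaghh} (that is, independence from the choice of $h$) is already built into Theorem \ref{pro4.4}, so we do not need to revisit it. The only delicate point is to pick a single $h$ that witnesses domain membership simultaneously at both levels, and this is exactly what the inclusion $H_1\subseteq H_2$ makes automatic, while for $\pi_H$ the canonical choice $h=1$ always works.
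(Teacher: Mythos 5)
Your proposal is correct and follows essentially the same route as the paper: both proofs use part (ii) of Theorem \ref{pro4.4} to handle domain membership and formula \eqref{etaghh} to verify equivariance, with the paper also implicitly taking the witness $h=1$ for $\pi_H$ and a common witness $h\in H_1\subseteq H_2$ for $\pi_{H_1,H_2}$. Your version merely makes these witness choices explicit, which is a presentational (not mathematical) difference.
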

\begin{proof} We first show that $\pi_H$ is a $G$-map. Take $(g,x)\in G*X,$ by (ii) of Theorem  \eqref{pro4.4} the pair $(gH,\pi_H(x))$ belongs to  $G/H* X/\!\sim_H$ and follows by \eqref{etaghh}  that  $\pi_H(\eta(g, x))=\eta_{G/H}(gH,\pi_H(x))$.  Hence $(g,\pi_H(x))\in G* X/\!\sim_H$ and  $\eta^{p_H}(g,\pi_H(x))=\pi_H(\eta(g, x)) $ which shows that $\pi_H$ is a $G$-map.  Now we show that $\pi_{H_1, H_2}$ is a $G$-map. Suppose  $(g,\pi_{H_1}(x))\in G* X/\!\sim_{H_1}$.  We need to show that   $(g,\pi_{H_2}(x))\in G* X/\!\sim_{H_2}$ and $\pi_{H_1,H_2}(\eta_{G/H_1}( gH_1, \pi_{H_1}(x)))=\eta_{G/H_2}(gH_2,\pi_{H_2}(x)).$ We have $(gH_1,\pi_{H_1}(x))\in G/H_1* X/\!\sim_{H_1}$ using  (ii) of  Theorem \ref{pro4.4} there exists an $h\in H_1\subseteq H_2$ such that  $(hg,x)\in G*X$, thus  $(gH_2,\pi_{H_2}(x))\in G/H_2* X/\sim_{H_2}$ and $(g,\pi_{H_2}(x))\in G*X/\sim_{H_2}$. It follows from \eqref{etaghh} that 
 $$\eta^{p_{H_1}}(g, \pi_{H_1}(x))=\eta_{G/H_1}(gH_1,\pi_{H_1}(x))=\pi_{H_1}(\eta(hg, x)),$$ in a similar way  $\eta^{p_{H_2}}(g, \pi_{H_2}(x))=\eta_{G/H_2}(gH_2, \pi_{H_2}(x))=\pi_{H_2}(\eta(hg, x))$.  Therefore
$$\pi_{H_1,H_2}(g\cdot \pi_{H_1}(x))=\pi_{H_1,H_2}(\pi_{H_1}(hg\cdot x))=\pi_{H_2}(hg\cdot x)=g\cdot\pi_{H_2}(x),$$ 
and we conclude that $\pi_{H_1,H_2}$ is a  $G$-map. 
\end{proof}

\subsection{Inverse limits} 

As an application of Theorem \ref{lem4.3} we extend \cite[Theorem 9]{A0} to the context of partial actions. Suppose that $G$ is a compact group, let $(I,\leq)$ be a directed set and consider an inverse system  $\{G_i; p_i^j; I\}$  in the category of topological groups such that $G =\displaystyle{ \lim_{\longleftarrow}}\ G_i$, where  $\{p_i: G\rightarrow G_i\}_{i\in I}$ is the family of projections  such that $p_i^j\circ p_j=p_i$   for  $i ,j\in I$ and $i\leq j.$ 
Take $i\in I,$  then  $H_i=\ker(p_i)=p_i^{-1}(\{e_i\})$ is a closed normal  subgroup of  $G$ thus  is compact and $H_j\leq H_i$  for every $i, j\in I$ with $i\leq j$.  Let $\eta$  be a partial action of   $G$ on $X,$ now for  $i\in I$  the group $H_i$  acts partially on $X$ via restriction, setting $X_i=X/\sim_{H_i}$ we denote by  $\pi_i^j=\pi_{H_j,H_i}: X_j\rightarrow X_i$,  the $G$-map defined in \eqref{pih12} and $\pi_i=\pi_{H_i}:X\to X_i,$ the orbit equivalence map.

We proceed with the next.

\begin{lem}\label{lema 3.1}
	Following the notations above consider   $i,j\in I$ with $i\leq j$ let $\eta:G*X\to X$ be a continuous partial action with $G*X$ is closed, then the family  $\{\pi_i:X\rightarrow X_i\}_{i\in I}$ separates points of closed sets in $X$.
\end{lem}
\begin{proof} The proof follows the lines of \cite[Lemma 3]{A0}, where it is shown that $\pi_i(x)\notin \pi_i(C)$ for any  $x\in X$ and  $C\subseteq X$  a closed subset such that $x\notin C.$ On the other hand, the fact that $G*X$  is closed is used to  guarantee that  $H_i*X=(G\ast X)\cap (H_i\times X),$  is closed in $H_i\times X,$ which implies that $\pi_i$ is closed, for any $i\in I.$    Then the family $\{\pi_i:X\rightarrow X_i\}_{i\in I}$ separates points of closed sets in $X,$ as desired.
\end{proof}

Assuming $X$  Hausdorff and letting  $i,j,k\in I$ be such that $i\leq j\leq k.$ For  $x\in X,$  we have  $\pi_i^k(H_k^x\cdot x)=(\pi_i^j\circ \pi_j^k)(H_k^x\cdot x),$ and  
$\mathcal{X}=\lbrace X_i, \pi_i^j, I\rbrace$ is an inverse system of  spaces endowed with partial actions of $G.$

We finish this work with the next.
\begin{pro}\label{parinv}
Under the assumptions above,  let $\mathcal X=\lbrace \varphi_i:\displaystyle{\lim_{\longleftarrow}}X_i\rightarrow X_i\rbrace_{i\in I}$ be the family of projections associated  to  $\displaystyle{\lim_{\longleftarrow}}X_i,$ if $X$ is Hausdorff and $G\ast X$ is closed in $G\times X,$  then the following assertions hold.
\begin{itemize}
	\item  [(i)] There is a partial action $\theta$  of $G$ on $\displaystyle{\lim_{\longleftarrow}}X_i$ such that $X$ is $G$-equivalent to $\displaystyle{\lim_{\longleftarrow}}X_i.$
	\item  [(ii)]  For any $j\in I$ the  diagram   \begin{center}
		$\xymatrix{G*{\lim\limits_{\longleftarrow}}X_i \ar[d]_-{{\rm id}_G \times \varphi_j} \ar[r]^-{\theta}&\ar[d]^-{\varphi_j} {\lim\limits_{\longleftarrow}}X_i \\
			G*X_j\ar[r]_-{\eta^{p_{H_j}}}& X_j
		}$ 
	\end{center}  commutes, where $\eta^{p_{H_j}}$ is  the partial action of $G$ on $X_j$  given by \eqref{etaqu}.
\end{itemize} 

\end{pro}
\begin{proof} (i)
 It is not difficult to see that  the family $\Pi=\lbrace \pi_i:X\rightarrow X_i\rbrace_{i\in I}$ is compatible with  $\mathcal{X}$  then by the universal property of the inverse limit there exists a continuous map  $\lambda:X\rightarrow\displaystyle{\lim_{\longleftarrow}}X_i,$ such that $\varphi_i\circ \lambda=\pi_i,$ for any $i\in I.$  We shall prove that $\lambda$ is a homeomorphism.  First,   
by   Lemma \ref{lema 3.1}, the family $\Pi$ separates points of closed sets in $X,$ further by (i) in Theorem \ref{propiedades} each orbit space $X_i$ is $T_2,$ then the  map $\lambda$ is an embedding.   Let $(x_i)_{i\in I}\in \displaystyle{\lim_{\longleftarrow}}X_i,$ since $H_i\ast X$ is closed in $H_i\times X$ and by Lemma \ref{cerrada}  the map $\pi_i$ is perfect, we have $A_i=\pi^{-1}_i(x_i)$ is a  compact subset of $X$, for all $i\in I$. Now write  $\mathcal{A}=\lbrace A_i\rbrace_{i\in I}$ and take  $i,j\in I$ such that $i\leq j.$ For  $y\in A_j$ we have  $\pi_i(y)=\pi_i^j(\pi_j(y))=\pi_i^j(x_j)=x_i,$ and  $A_j\subseteq A_i,$ from this one concludes that  $\mathcal{A}$ has the finite intersection property, therefore  $\bigcap\limits_{i\in I}A_i\neq \emptyset.$  Finally if  $y\in\bigcap\limits_{i\in I}A_i,$ then  $\pi_i(y)=x_i,$ that is  $(x_i)_{i\in I}=\lambda(y),$ therefore $\displaystyle{\lim_{\longleftarrow}}X_i=\lambda(X)$ and $\lambda$ is a homeomorphism. To define a partial action of $G$ on  $\displaystyle{\lim_{\longleftarrow}}X_i$ we set 
$G*\displaystyle{\lim_{\longleftarrow}}X_i=\left\{(g,x)\in G\times {\lim_{\longleftarrow}}X_i \mid (g,\lambda\m(x))\in G*X\right\}$ and  
$$\theta: G* \displaystyle{\lim_{\longleftarrow}}X_i\ni (g,x)\mapsto \lambda(g\cdot \lambda\m(x))\in  \displaystyle{\lim_{\longleftarrow}}X_i,$$ thus $\lambda$ is a $G-$map  which shows the first item. 

(ii) Take $j\in I$ we first check that the map ${\rm id}_G\times \varphi_j$ is well defined, that is for $(g, x) \in G* \lim\limits_{\longleftarrow}X_i$ 
one has that $(g, x_j)\in G*X_j,$  where $x=(x_i)_{i\in I}.$ Indeed, if $(g, x) \in G* \lim\limits_{\longleftarrow}X_i$ we get that $(g,\lambda\m(x))\in G*X$ which by item (ii) in 
Theorem \ref{pro4.4} implies $(gH_j, \pi_j(\lambda\m(x)))\in G/H_{j}* X_j$ and thus  $(g, x_j)=(g, \pi_j(\lambda\m(x)))\in G* X_j$ thanks to \eqref{dometa}, and  ${\rm id}_G\times \varphi_j$ is well defined. To check that the diagram commutes observe that 
$$\eta^{p_{H_j}}\circ ({\rm id}_G \times \varphi_j) (g, x)=\eta_{G/H_j}(gH_j, \pi_j(\lambda\m(x)))=\pi_j((hg)\cdot \lambda\m(x)),$$ where by (ii) of Theorem \ref{pro4.4}  the element $h\in H_j$ is  such\ that $(hg,\lambda\m (x))\in G*X.$ Since $\lambda\m (x) \in X_{g\m h\m}\cap X_{g\m}$ we get by item (ii) of Proposition \ref{QR} that $g\cdot \lambda\m(x)\in X_{h\m}$ thus   $(hg)\cdot \lambda\m(x)=h\cdot (g\cdot \lambda\m(x)) $ and $\pi_j((hg)\cdot \lambda\m(x))=\pi_j(g\cdot \lambda\m(x)).$ On the other hand $\varphi_j\circ \theta(g,x)=\varphi_j\lambda(g\cdot \lambda\m(x))=\pi_j(g\cdot \lambda\m(x)).$ Then $\eta^{p_{H_j}}\circ ({\rm id}_G \times \varphi_j) =\varphi_j\circ \theta$  which ends the proof.
\end{proof}

%

%
%
%
%


\begin{thebibliography}{99}


\bibitem{AB}  F. Abadie,  Enveloping actions and Takai duality for partial actions. {\it Journal of Funct. Anal.} (2003) 197: 14-67.



\bibitem{A0} S.A. Antonyan, Equivariant  embeddings into  G-AR's \emph{Glasnik Matemati$\check{c}$ki.} 22 (42) (1987) 503–533.










\bibitem{BE} A. Baraviera, R. Exel, D. Gonçalves, F. B. Rodrigues and D. Royer,  Entropy for partial actions of  $\Z,$
{\it Proc. Amer. Math. Soc.} 150 (2022), 1089-1103

%
\bibitem{SDN} S. De Neymet and R. Jiménez, 
 Introducci\'on a los grupos topol\'ogicos de transformaciones. Aportaciones matem{\'a}ticas: Textos, 2005.

\bibitem{DO} M. Dokuchaev, Recent developments around partial actions, {\it São Paulo J. Math. Sci.} (2019) \textbf{13} (1) 195-247.

 \bibitem{DEX}M. Dokuchaev and  R. Exel,  The ideal structure of algebraic partial crossed products.{\it Proc. Lond. Math. Soc.} (3) 115 (2017), no. 1, 91–134.
\bibitem{du}  J. Dugundji, Topology, {\it Allyn and Bacon,} Inc., Boston, 1966.



\bibitem{Ruy1} R. Exel, Circle actions on $C^*$-algebras, partial automorphisms and generalized Pimsner-Voiculescu exact sequences, {\it J. Funct. Anal.} \textbf{122} (1994), (3), 361 - 401. 
%
\bibitem{RUY3} R. Exel, Partial actions of group and actions of inverse semigroups, {\it Proc. Am. Math.  Soc.} \textbf{126} (12) (1998)  3481–3494.
%
\bibitem{Ruy} R. Exel, Partial dynamical systems, Fell bundles and applications, Mathematical surveys and monographs; volume 224,  Providence, Rhode Island: American Mathematical Society, 2017.
%
%
\bibitem{Ruy2} R. Exel, T. Giordano, and D. Gon\c calves, Enveloping algebras of partial actions as groupoid $C^*$-algebras, {\it J. Operator
Theory} \textbf{65} (2011)  197–210.


\bibitem{GOR} D. Gon\c calves, J. \"{O}inert and  D. Royer, Simplicity of partial skew group rings with applications to Leavitt path algebras and topological dynamics. {\it  J. Algebra 420} (2014), 201–216.



\bibitem{KL} J. Kellendonk and M. V. Lawson, Partial Actions of Groups, \emph{International Journal of Algebra and Computation} \textbf{14} (2004) 87-114.


\bibitem{MPR} L. Mart\'inez, H. Pinedo and E. Ram\'irez, Partial actions of groups on hyperspaces,  {\it Appl. Gen. Topol.} 23 (2022), no. 2, 255–268.
\bibitem{MPV}  L. Mart\'inez,  H. Pinedo and A. Villamizar, Partial actions of groups on profinite spaces {\it  arXiv}: 2101.04779v3.








\bibitem{PU1}  H. Pinedo and C. Uzc\'ategui, Polish globalization of Polish group partial actions, \emph{Math. Log. Quart.} \textbf{63}  6 (2017) 481–490.


\bibitem{QR} J.C. Quigg and I. Raeburn, Characterizations of crossed products by partial actions. {\it J.Operator Theory} (1997) 37:  311-340.		
\bibitem{T} M. Tka$\check{c}$enko, L. Villegas, C. Hern\'andez,  and O. Rend\'on, Grupos topol\'ogicos. {\it SEP,} 1997.
			
			
			
			
			

			
			
		\end{thebibliography}
	\end{document}